\documentclass[letterpaper,11pt]{amsart}
\usepackage[margin=2.4cm]{geometry}

\usepackage[utf8]{inputenc}
\usepackage[english]{babel}
\usepackage{amsmath,amssymb,amsfonts,amsthm}
\usepackage{graphicx}
\usepackage{hyperref}
\usepackage[draft]{fixme}
\newcounter{thm}


\newtheorem{theorem}[thm]{Theorem}
\newtheorem*{theorem*}{Theorem}
\newtheorem{lemma}[thm]{Lemma}

\newtheorem*{conjecture*}{Conjecture}
\newtheorem{corollary}[thm]{Corollary}

\newtheorem{remark}[thm]{Remark}
\theoremstyle{definition}
\newtheorem{definition}[thm]{Definition}

\newcommand{\distkr}{\operatorname{dist}_\text{\rm KR}}

\newcommand{\eps}{\varepsilon}
\newcommand{\id}{\operatorname{id}}

\newcommand{\bbC}{\mathbb C}

\newcommand{\bbR}{\mathbb R}

\newcommand{\bbQ}{\mathbb Q}

\newcommand{\bbN}{\mathbb N}

\newcommand{\bbZ}{\mathbb Z}

\title{Authomorphic measures with negative exponents for multicritical circle maps}
\author{Nataliya Goncharuk}

\address{Nataliya Goncharuk (corresponding author), Texas A\&M University, College Station, TX, USA}
\email{\url{natasha_goncharuk@tamu.edu}}
\author {Michael Yampolsky}
\address{Michael Yampolsky, University of Toronto, Toronto, Canada}
\email{\url{yampol@math.toronto.edu}}

\begin{document}
\begin{abstract}
Authomorphic or $s$-measures for circle diffeomorphisms were introduced by R.~Douady and J.-C.~Yoccoz \cite{DY}. They 
have multiple applications in circle dynamics, with the case $s=-1$ being particularly important for describing conjugacy classes.
In \cite{FGN}, the authors proved existence and uniqueness of automorphic $s$-measures  for multicritical circle maps for all $s>0$. 
The purpose of this paper is to extend these results to $s$-measures with negative values of $s$. As an application, we prove a smoothness result for irrational Arnold tongues in families of multicritical maps.
\end{abstract}

\maketitle

\section{Introduction}
Recall that the rotation number of an orientation-preserving circle homeomorphism $f$ is given by 
$$
\rho(f) = \lim_{n\to \infty}\frac{F^n(x)}{n}, \quad \rho(f) \in \bbR/\bbZ,
$$
where  $x\in \bbR/\bbZ$ and $F\colon \bbR \to \bbR$ is a lift of $f\colon \bbR/\bbZ\to \bbR/\bbZ$ to the real line. 

For a circle diffeomorphism $f:\bbR/\bbZ\to\bbR/\bbZ$ a Borel probability measure $\mu$ on the circle is called automorphic with an exponent $s$, or simply an $s$-measure, if, for any continuous test function $\xi$, one has
\begin{equation}
\label{eq-def0}
\int_{\bbR/\bbZ}\xi d\mu = \int_{\bbR/\bbZ} \xi(f(x))(f'(x))^s d\mu.
\end{equation}
In \cite{DY}, R.~Doaudy and J.-C.~Yoccoz showed that if the rotation number $\rho(f)$ is irrational, then for each $s$, such a measure exists and is unique.

If a circle homeomorphism $f$ is smooth except for finitely many critical points, the equation (\ref{eq-def0}) makes sense for $s\geq 0$. In the case when all critical points of $f$ are non-flat (such a map is known as a multicritical circle map), the existence and uniqueness of an $s$-measure was recently established by E.~de~Faria, P.~Guarino and B.~Nussenzveig \cite{FGN}.

For negative values of $s$, the invariance equation (\ref{eq-def0}) needs to be rewritten:

\begin{definition}
For any circle homeomorphism $f\colon \bbR/\bbZ\to \bbR/\bbZ$ which is smooth except for finitely many points and $s<0$, a Borel probability measure $\mu$ on a circle is called an $s$-measure if, for any continuous test function $\phi$, we have 
\begin{equation}
\label{eq-def}
\int_{\bbR/\bbZ}\phi d\mu = \int_{\bbR/\bbZ} (f'|_{f^{-1}(x)})^{-s} \phi|_{f^{-1}(x)} d\mu.
\end{equation}
\end{definition}
This is equivalent to (\ref{eq-def0}) in the above described cases, if one uses test functions of the form $$\xi(x)=\phi(f^{-1}(x))(f'(f^{-1}(x)))^{-s}.$$
By iterating the relation (\ref{eq-def}), we get, for all $k\in\bbN$:
\begin{equation}
\label{eq-minus1}
\int_{\bbR/\bbZ} \phi d\mu = \int_{\bbR/\bbZ} ((f^k)'|_{f^{-k}(x)})^{-s} \phi|_{f^{-k}(x)} d\mu
\end{equation}

The main result of the paper is the following:
\begin{theorem}
\label{th-main}
Each multicritical circle map $f$ with irrational rotation number has a unique $s$-measure for any $s<0$. 
\end{theorem}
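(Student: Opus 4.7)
The plan is to adapt the framework of \cite{FGN} for $s>0$, accounting for the fact that for $s<0$ the weight $(f')^{-s}$ is nonnegative and vanishes on the critical set (rather than being singular there). For existence, I would approximate $f$ by $C^1$ diffeomorphisms $f_n$ and invoke the Douady--Yoccoz result for each $f_n$. Specifically, since $f'$ is continuous, nonnegative, and has integral $1$ on $\bbR/\bbZ$, it is uniformly approximable by strictly positive smooth densities (e.g.\ $f'_n = (f' + 1/n)/(1 + 1/n)$); integrating and then adjusting by a rotation constant (chosen via the intermediate value theorem, using irrationality of $\rho(f)$) yields a sequence of $C^1$ circle diffeomorphisms $f_n \to f$ in $C^1$ with $\rho(f_n)$ irrational and $\rho(f_n) \to \rho(f)$. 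Douady--Yoccoz provides a unique $s$-measure $\mu_n$ for each $f_n$; pass to a weak-$*$ subsequential limit $\mu$. The uniform convergences $f_n^{-1} \to f^{-1}$ and $(f'_n \circ f_n^{-1})^{-s} \to (f' \circ f^{-1})^{-s}$ (the latter using that $x \mapsto x^{-s}$ is continuous on $[0, \infty)$ for $-s > 0$) let the defining relation (\ref{eq-def}) pass to the limit. The normalization $\int (f' \circ f^{-1})^{-s} d\mu = 1$ (obtained with $\phi \equiv 1$) passes through as well, so $\mu$ cannot concentrate on the finite set of critical values.

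For uniqueness, suppose $\mu_1, \mu_2$ are two $s$-measures. Let $\cP_n$ be the $n$-th dynamical partition associated to the continued fraction expansion of $\rho(f)$, with $q_n$ the standard denominators, and exploit the iterated equation (\ref{eq-minus1}) at $k = q_n$. By the real bounds for multicritical circle maps, $f^{q_n}$ has uniformly bounded distortion on each atom $I \in \cP_n$, so plugging $\phi = \chi_I$ expresses $\mu_j(I)$ as an integral over $f^{q_n}(I)$ (again a union of atoms of $\cP_n$) of the almost-constant weight $((f^{q_n})')^{-s} \circ f^{-q_n}$. The cyclic combinatorics of $f^{q_n}$ on $\cP_n$ then produce a linear recursion on the vector of atomic weights $(\mu_j(I))_{I \in \cP_n}$, whose coefficients depend only on the $s$-exponent and the geometry; showing this recursion is a contraction on its zero-mean subspace and letting $n \to \infty$ yields $\mu_1(I) = \mu_2(I)$ for all $I$ and hence $\mu_1 = \mu_2$.

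The main obstacle is this uniqueness step. Because $(f')^{-s}$ vanishes on the critical set, the transfer operator has a nontrivial kernel, and equation (\ref{eq-def}) a priori permits degenerate $s$-measures with atomic components on the forward orbits of critical points; the normalization forbids full concentration on the critical-value set but not partial atoms. Ruling out such pathologies will require the full strength of the real-bounded geometry on dynamical partitions together with irrationality of $\rho(f)$, which ensures the critical orbits are non-periodic and dense, and is what ultimately makes the linear recursion on atomic weights strictly contracting on the zero-mean subspace.
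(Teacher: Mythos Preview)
Your existence argument is essentially the paper's: approximate $f$ by smooth diffeomorphisms with irrational rotation number, apply Douady--Yoccoz, and pass a weak limit through \eqref{eq-def}. (One quibble: Douady--Yoccoz is stated for $C^2$, not $C^1$; your construction actually gives $C^3$ approximants since $f$ is $C^3$, so just say so.)

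The uniqueness sketch, however, has real gaps. First, the assertion that ``$f^{q_n}$ has uniformly bounded distortion on each atom $I\in\cP_n$'' is false: whenever the forward orbit $I,f(I),\dots,f^{q_n-1}(I)$ meets a neighborhood of a critical point---and for a positive fraction of atoms it does---the Koebe-type distortion bound (Lemma~\ref{lem-dis}) does not apply, and indeed $(f^{q_n})'$ can be arbitrarily close to $0$ on part of $I$ while remaining bounded below elsewhere. Second, $f^{q_n}(I)$ for $I\in\cP_n$ is \emph{not} a union of atoms of $\cP_n$; the combinatorial renewal relating dynamical partitions works for single iterates of $f$ between $\cP_n$ and $\cP_{n+1}$, not for $f^{q_n}$ acting on $\cP_n$. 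So neither the ``almost-constant weight'' claim nor the ``cyclic combinatorics'' picture stands, and you have no actual linear recursion to contract. Finally, the atomlessness issue you flag is not resolved by anything in the sketch.

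The paper's route is different and avoids these pitfalls. Atomlessness is disposed of directly (Lemma~\ref{lem-atoms}) using \eqref{eq-atoms} and the bound $(f^{q_n})'<C_2$. Uniqueness is then obtained by proving \emph{ergodicity} of every $s$-measure: for an invariant set $B$ one studies the functional $\omega_B(I)=\mu(I\cap B)\,|I|^{-s}$, which for $s<0$ is monotone in $I$ (a genuine simplification over the $s>0$ case). A comparison lemma (Lemma~\ref{lem-est}) treats critical and non-critical iterates separately---at a critical time one only gets a one-sided inequality $f^{k+1}(\Delta)\succcurlyeq f^k(\Delta)$---and this is assembled (Theorem~\ref{th-omega}) into a uniform comparison of $\omega_B$ across all atoms of $\cP_n$. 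A Lebesgue density point argument then forces $\mu(B)\in\{0,1\}$. Ergodicity plus convexity of the set of $s$-measures yields uniqueness. If you want to salvage a transfer-operator style proof, you will at minimum need a substitute for bounded distortion at the critical passages; the paper's one-sided estimate at critical times (coming from Lemma~\ref{lem-atcrit}) is exactly such a substitute, but it does not produce a two-sided contraction.
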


\begin{remark}
In \S~\ref{sec-atom}, we will show that for multicritical circle maps, $s$-measures are atomless for each $s<0$. This implies that the identity \eqref{eq-def} holds for step functions as well. Standard arguments then imply that it also holds for all measurable functions. 
\end{remark}

We note that the case $s=-1$ is of a particular importance for the study of conjugacy classes due to the following simple observation. Consider a vector field on a circle which satisfies the cohomological equation
$$v(x)=w(f(x))-f'(x)w(x).$$
Equivalently, $v$ corresponds to a conjugacy deformation of $f$:
$$(\id+tw)\circ f\circ (\id+tw)^{-1}=f+tv+o(t).$$
Then 
$$
\int v|_{f^{-1}(x)}d\mu=0
$$
for an automorphic measure $\mu$ with exponent $s=-1$. 
This integral condition  can be used to characterize the tangent bundle to the conjugacy class of a circle map with an irrational rotation number, as seen in \cite[Theorem 2]{DY}. In our paper \cite{GY23} we applied this tool to the Arnold family
$$f_{a,\mu}(x) = x+a+\nu \sin 2\pi x\text{ for }\nu\in \left[0,\frac{1}{2\pi}\right],$$ 
and showed that for each irrational $\alpha$ the $\alpha$-Arnold tongue (or Arnold curve)
$$A_\alpha=\{(a,\nu)\;|\;\rho(f_{a,\nu}(x))=\alpha\}$$
is a $C^1$-smooth curve over the whole closed interval $\nu\in [0,{1}/{2\pi}]$. The same principle was previously used in \cite{Slammert} to show this on the interval without the right endpoint. Furthermore, in \cite{GY23} we showed a similar smoothness result for any irrational Arnold tongue in a typical family which contains both circle diffeomorphisms and maps with a single cubic critical point (critical circle maps).

Our proof of the existence and uniqueness of $(-1)$-measures for critical circle maps in \cite{GY23} essentially relied on the renormalization picture we constructed in \cite{GY}. In this paper we give a much simpler proof for all negative exponents and any number of non-flat  critical points. 
Our proof of uniqueness in Theorem \ref{th-main} follows closely the arguments of \cite{FGN}, but our case turns out to be simpler.  
As an application, in \S~\ref{sec-tongues} we apply our main result to irrational Arnold tongues in the multicritical case, and obtain a corresponding $C^1$-smoothness theorem.

\section{Multicritical circle maps}

As in \cite{FGN}, we define:
\begin{definition}

An orientation-preserving $C^3$-smooth circle homeomorphism $f$ is called \emph{multicritical} if $f'>0$ everywhere except finitely many points, and on a neighborhood of each of these points $c_n$ we have $f(x) = f(c_n) + \psi_n(x) |\psi_n(x)|^{d_n-1}$, where $\psi_n$ is a $C^3$-smooth diffeomorphism.  The number $d_n$ is called the order of the critical point $c_n$. 
\end{definition}

The local expression for $f$ implies the following: 
\begin{lemma}
\label{lem-atcrit}
For every critical point of a multicritical circle map, one can find its neighborhood  $U$  and $c>0$ such that  $f'(x)\le c \frac{|f(I)|}{|I|}$ for any $x\in I\subset U$. 
\end{lemma}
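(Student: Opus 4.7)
The plan is to reduce the desired inequality to a one-variable estimate on the model power map $h(y) := y\,|y|^{d-1}$, exploiting the explicit local form of $f$ near the critical point. Writing $d := d_n$ and $\psi := \psi_n$, differentiation of $f(x) = f(c_n) + \psi(x)|\psi(x)|^{d-1}$ yields
\[
f'(x) \;=\; d\,|\psi(x)|^{d-1}\,\psi'(x).
\]
Since $\psi$ is a $C^3$ diffeomorphism with $\psi(c_n) = 0$, I would fix a neighborhood $U$ of $c_n$ on which $\psi'$ is trapped between positive constants $0 < a \le \psi'(x) \le b$. For any interval $I = [p,q] \subset U$, setting $y_1 := \psi(p)$ and $y_2 := \psi(q)$, the change of variable gives $a\,|I| \le |y_2 - y_1| \le b\,|I|$, while $|f(I)| = |h(y_2) - h(y_1)|$ by the explicit formula. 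The lemma will therefore reduce, up to a multiplicative factor $b/a$, to the model inequality
\[
d\,|y|^{d-1} \;\le\; C(d)\cdot\frac{|h(y_2) - h(y_1)|}{|y_2 - y_1|} \qquad \text{for every } y \in [y_1, y_2].
\]

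To establish the model inequality I would split into two cases. If $y_1, y_2$ share a sign, the oddness of $h$ and the evenness of $|y|^{d-1}$ let me assume $0 \le y_1 < y_2$, and the elementary bound $y_2^d - y_1^d \ge y_2^{d-1}(y_2 - y_1)$ (which is equivalent to $y_1\,(y_2^{d-1} - y_1^{d-1}) \ge 0$) shows the average slope is at least $y_2^{d-1} \ge |y|^{d-1}$, so $C = d$ suffices. If $y_1 < 0 < y_2$, the same symmetry lets me assume $y_2 \ge |y_1|$; then $h(y_2) - h(y_1) = y_2^d + |y_1|^d \ge y_2^d$ while $y_2 - y_1 = y_2 + |y_1| \le 2 y_2$, so the average slope is at least $y_2^{d-1}/2 \ge |y|^{d-1}/2$, and $C = 2d$ suffices.

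The argument is essentially a short computation, and I do not see a genuine obstacle. The only place requiring mild care is the mixed-sign case, where the two-term numerator $y_2^d + |y_1|^d$ must be compared to its dominant term in order to extract a clean lower bound; this is what forces the constant to double. The resulting constant in the lemma is $c = (b/a)\cdot 2d$, depending on the critical point $c_n$ only through its order $d_n$ and through the $C^1$-bounds on $\psi_n$ over the chosen neighborhood $U$.
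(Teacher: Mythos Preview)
Your argument is correct and is precisely the computation the paper has in mind: the paper gives no proof beyond the remark that the lemma follows from the local expression $f(x)=f(c_n)+\psi_n(x)|\psi_n(x)|^{d_n-1}$, and your reduction to the model map $h(y)=y|y|^{d-1}$ followed by the same-sign/mixed-sign elementary estimate is exactly the natural way to unpack that remark. The only point you leave implicit is that $\psi'>0$, which is forced by the orientation-preserving assumption on $f$.
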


For $\alpha\in(0,1)$ let $\alpha=k_0 + 1/ (k_1 + 1/(k_2 + \dots) \dots )$ be its continued fraction expansion with positive terms; it is unique if and only if $\alpha\notin\bbQ$. We will use the abbreviated notation  $\alpha = [k_0,k_1, k_2, \dots]$. We will let $\ell(\alpha)\in\bbN\cup\{\infty\}$ stand for the minimal length of a continued fraction expansion of $\alpha$. For $n\leq \ell(\alpha)$, we denote $p_n/q_n$ the continued fraction convergents $$p_n/q_n = [k_0, \dots, k_{n-1}].$$
For any point $x\in\bbR/\bbZ$, we let $I_n(x)$ be the arc of the circle with boundary points $x$ and $f^{q_n}(x)$ which does not contain $f(x)$; we denote it $[x,f^{q_n}(x)]$. The iterate $q_n$ is a  closest return time in the sense that $I_n(x)$ does not contain in its interior any iterates $f^j(x)$ with $0<j<q_n$.

We further set $$I_n = [0, f^{q_n}(0)]\text{, and }I_n^k = f^k(I_n).$$
The intervals 
$$\mathcal P_n = \{ I_n^{k}\}_{k=0}^{q_{n+1}-1} \bigcup\{I_{n+1}^{k}\}_{k=0}^{q_n-1} $$
form the {\it $n$-th dynamical partition of the circle}: they have disjoint interiors and cover all of $\bbR/\bbZ$.

 \begin{lemma}[{\bf Real Bounds}]
   \label{lem-realbounds}
   There exists a constant $C_1>0$ such that the following holds. 
For every $C^3$-smooth multicritical circle map $f$  with irrational rotation number, there exists $n_0=n_0(f)\in \bbN$ such that for any $n\geq n_0$, any two adjacent intervals $I,J$ of the partition $\mathcal P_n$ are $C_1$-commensurable:
  $$\frac 1{C_1}<\frac{|I|}{|J|}< C_1.$$
\end{lemma}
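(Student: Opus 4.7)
My plan is to follow the now-classical \emph{a priori bounds} approach originated by Swiatek, Herman and de Faria--de Melo in the one-critical-point case, and extended by Estevez, de Faria, Guarino and Nussenzveig to the multicritical setting. The two main ingredients are a cross-ratio (Poincaré-length) distortion estimate for diffeomorphic iterates of $f$, together with the local power-law structure of $f$ near each critical point that is captured by Lemma~\ref{lem-atcrit}.

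The first step is to establish the cross-ratio distortion inequality for $C^3$ multicritical maps: there is a universal constant $K>0$ such that whenever $J_1,\dots,J_m$ are pairwise disjoint open intervals on which $f^k$ is a diffeomorphism, with $\bigcup_i f^k(J_i)\subset \bbR/\bbZ$, the cross-ratio distortion of $f^k$ on every $J_i$ is controlled by a function of $K\sum_i |f^k(J_i)|\le K$. This is the classical Sullivan/Swiatek cross-ratio inequality applied along the chain $f, f^2,\dots,f^k$, and it uses only bounded Schwarzian off the critical set (passing, if needed, to a $C^3$-close approximant with negative Schwarzian).

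The second step is to decompose each iterate $f^{q_n}$ acting on intervals near $I_n$ as a composition of ``diffeomorphic'' blocks, on which step~1 applies directly, interleaved with a uniformly bounded number of ``critical'' passages through fixed small neighborhoods of critical points. The number of critical passages is $O(1)$ because, by definition of a closest return time, the first $q_n$ iterates of any atom of $\mathcal{P}_n$ are pairwise disjoint, so each critical neighborhood is visited at most a bounded number of times independently of $n$. On each critical passage, Lemma~\ref{lem-atcrit} together with the normal form $f(x)=f(c)+\psi(x)|\psi(x)|^{d-1}$ converts ratios of preimage lengths into ratios of image lengths, up to a multiplicative constant depending only on the orders $d_i$ of the critical points.

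The induction on $n$ then goes as follows: assuming $C$-commensurability of adjacent atoms of $\mathcal{P}_{n-1}$, one pulls back via inverse branches of an appropriate iterate of $f$ and combines the distortion bound of step~1 with the critical-passage estimate of step~2 to obtain $C'$-commensurability of adjacent atoms of $\mathcal{P}_n$. A standard bootstrap, exploiting that the interval lengths $|I_n|$ decay as $n\to\infty$ (so the arguments $K\sum_i|f^k(J_i)|$ entering the distortion bound also decay), allows one to replace $C'$ by a universal $C_1$ once $n\ge n_0(f)$. I expect the main obstacle to be the bookkeeping at critical passages: because $f'$ blows up or vanishes there, estimating the ratio of lengths of two neighboring intervals under a power-like map requires a careful case analysis depending on whether the critical point lies inside, at an endpoint of, or outside the interval being transported, and it is this step where the full strength of the $\psi_n$-structure in the definition of multicritical is essential.
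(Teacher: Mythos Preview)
The paper does not prove this lemma: it simply states ``For a proof see \cite{dFbounds}.'' Your outline follows exactly the standard argument in that reference (and its multicritical extensions by Estevez--de Faria--Guarino), so in that sense your approach is the same as the one the paper points to, only spelled out rather than cited.

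One small caution on your sketch: the cross-ratio inequality you invoke in step~1 does not require negative Schwarzian or an approximation argument; for $C^3$ maps with non-flat critical points the Koebe/cross-ratio distortion bound holds directly (see, e.g., de~Melo--van~Strien), and this is the form actually used in the references. Otherwise your decomposition into diffeomorphic blocks and a bounded number of critical passages, with Lemma~\ref{lem-atcrit} handling the latter, is the correct strategy and matches the cited proof.
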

For a proof see  \cite{dFbounds}. 

Recall that the \emph{distortion} of a diffeomorphism  $h$ on an interval $I$ is $\sup_{x,y \in I} |h'(x)| / |h'(y)|$.
The following lemma is used to estimate the distortion of high iterates of a circle map. 
An interval $I$ is called $\tau$-\emph{well inside} an interval $J$ if $J = L\cup I \cup R$ where intervals $L, R$ satisfy $|L|/|I|>\tau $ and $|R|/|I|>\tau$.  
\begin{lemma}[{\bf Distortion lemma}]
\label{lem-dis}
For a multicritical circle map $f$ and constants $\tau>0$, $m\in \bbN$, there exists a constant $K=K(f, m, \tau)$ with the following property. 

Let $J\subset I$ be intervals on the circle; let  $f^s(I)= I_{s}$, $f^s(J) = J_s$.  Assume that the intervals $I_s, 0\le s\le n-1$, do not contain critical points of $f$. Also, suppose that the collection of intervals $\{I_s\}_{s=0}^n$ covers each point on the circle at most $m$ times, and suppose  that the interval $J_n$ is $\tau$-well inside  $I_n$.

Then the distortion of $f^n$ on $J$ is bounded by $K$.   
\end{lemma}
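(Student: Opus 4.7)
The plan is to follow the classical Koebe-style distortion argument adapted to the multicritical setting via cross-ratio inequalities, which is the standard tool when iterates may pass arbitrarily close to non-flat critical points. The starting point is the identity
\[
\log \frac{(f^n)'(x)}{(f^n)'(y)} = \sum_{s=0}^{n-1} \bigl(\log f'(f^s(x)) - \log f'(f^s(y))\bigr), \quad x,y\in J.
\]
Since $I_s$ contains no critical points for $0\le s\le n-1$, each $f|_{I_s}\colon I_s\to I_{s+1}$ is a diffeomorphism, but the derivative of $\log f'$ can blow up when $I_s$ is close to (though disjoint from) a critical point, so a naive Mean Value bound on each term is not strong enough.

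To handle this, I would use the cross-ratio inequality for $C^3$ multicritical maps. For nested intervals $T\subset M$ with $M\setminus T = L\sqcup R$, define the Poincaré-type cross-ratio
\[
b(M,T) = \frac{|M|\,|T|}{|L|\,|R|}.
\]
When $M$ avoids the critical set, one has a bound of the form
\[
b(f(M),f(T)) \le b(M,T)\bigl(1+C\,|M|\bigr)
\]
for a constant $C=C(f)$; near a non-flat critical point this follows from writing $f$ as a power of a $C^3$-diffeomorphism using the normal form $f(x)=f(c_n)+\psi_n(x)|\psi_n(x)|^{d_n-1}$ and using that pure power maps preserve this cross-ratio, while away from critical points it follows from boundedness of the nonlinearity $f''/f'$. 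Applying this inequality along the orbit to any subinterval $J''\subset J$ and using the multiplicity hypothesis $\sum_{s=0}^{n-1}|I_s|\le m$ yields
\[
b(I,J'')\le b(I_n,J''_n)\,\exp(Cm).
\]
Combined with the $\tau$-well-inside hypothesis, which bounds $b(I_n,J_n)$ from above by a constant depending only on $\tau$, this gives a uniform upper bound on $b(I,J'')$ for every $J''\subset J$ (by monotonicity in the interval position).

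Finally, to convert cross-ratio bounds into a pointwise distortion bound, I would apply the estimate to the three subintervals $[a,x]$, $[x,y]$, $[y,b]$ for $x,y\in J$ and $[a,b]=I$, and use the elementary inequality that relates
$(f^n)'(x)/(f^n)'(y)$ to a product of such cross-ratios in the limit as the endpoints tend together; this is the standard derivation of Koebe's principle from cross-ratio distortion. The resulting bound depends only on $\tau$, $m$, and the constant $C(f)$, giving the lemma. The main obstacle is the cross-ratio inequality itself near critical points: once one has the normal form and verifies the inequality for a pure power map, passing to a composition with $C^3$ diffeomorphisms requires a careful estimate because the diffeomorphic factor contributes an error that must be absorbed into the $(1+C|M|)$ term uniformly as $M$ approaches the critical value. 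All remaining steps are routine once this inequality is in place.
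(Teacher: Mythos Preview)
Your approach is correct and is exactly the classical Koebe argument via cross-ratio distortion; the paper does not give its own proof but cites de~Melo--van~Strien, p.~295, where precisely this argument is carried out. One slip worth flagging: your single-step inequality should read $b(f(M),f(T))\ge b(M,T)/(1+C|M|)$ rather than $b(f(M),f(T))\le b(M,T)(1+C|M|)$ --- near a non-flat critical point the Schwarzian is negative and cross-ratios \emph{expand}, so the content of the inequality is that they do not \emph{contract} too much on the part of the orbit away from the critical set; with the corrected direction, iteration indeed gives $b(I,J'')\le b(I_n,J''_n)\exp(Cm)$ as you state, whereas the inequality as written would yield the reverse and the argument would not close.
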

The proof can be found in \cite[p.295]{deMeloStrien}; note that for large $n$ the constant can be taken independent of $f$. 

Commensurability of subsequent intervals and distortion estimates, together with Lemma \ref{lem-atcrit}, imply the following.
\begin{lemma}
\label{lem-bounds-deriv}
For any multicritical map $f$  with irrational rotation number, there exists $C_2=C_2(f)$ such that for all $n\in \bbN$, $x\in \bbR/\bbZ$ we have $(f^{q_n}(x))' < C_2$.
\end{lemma}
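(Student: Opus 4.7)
The plan is to express $(f^{q_n})'(x)$ as a telescoping product against ratios of sizes of dynamical partition intervals, with the remaining distortion factors bounded by combining the Distortion Lemma~\ref{lem-dis} along critical-free segments of the orbit with Lemma~\ref{lem-atcrit} at each critical passage. For $n<n_0$ (with $n_0$ from Lemma~\ref{lem-realbounds}), $q_n\le q_{n_0}$ is bounded, so $(f^{q_n})'(x)\le\|f'\|_\infty^{q_{n_0}}$ is trivially bounded; I therefore assume $n\ge n_0$.

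Fix $x\in\bbR/\bbZ$, set $x_k:=f^k(x)$, and consider the intervals $I^{(k)}:=[x_k,x_{k+q_n}]$ for $k=0,\dots,q_n-1$. These are pairwise disjoint elements of the dynamical partition of $\bbR/\bbZ$ based at $x$, so $\sum_k|I^{(k)}|\le 1$, and by real bounds $|I^{(0)}|$ and $|I^{(q_n)}|$ are commensurable. Since $|f(I^{(k)})|=|I^{(k+1)}|$, a telescoping rearrangement yields
\[
(f^{q_n})'(x) \;=\; \frac{|I^{(q_n)}|}{|I^{(0)}|}\;\prod_{k=0}^{q_n-1}\frac{f'(x_k)\,|I^{(k)}|}{|f(I^{(k)})|},
\]
and it suffices to bound the product on the right.

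Because the $I^{(k)}$ are pairwise disjoint and $f$ has $N_c<\infty$ critical points, at most $N_c$ of the $I^{(k)}$ contain a critical point; label these indices $K_1<\dots<K_\ell$ with $\ell\le N_c$. For each such $K_i$, once $n$ is large enough that $I^{(K_i)}$ is contained in the critical neighborhood $U$ provided by Lemma~\ref{lem-atcrit} (which holds for all $K_i$ simultaneously once $n$ is large, since $\max_k|I^{(k)}|\to 0$ by real bounds), that lemma gives a uniform bound $f'(x_{K_i})\,|I^{(K_i)}|/|f(I^{(K_i)})|\le c_1$. The remaining indices split into at most $N_c+1$ critical-free segments $B_i=\{K_i+1,\dots,K_{i+1}-1\}$ (with $K_0:=-1$, $K_{\ell+1}:=q_n$), on each of which the partial product $\prod_{k\in B_i}f'(x_k)\,|I^{(k)}|/|f(I^{(k)})|$ equals, by the chain rule and mean value theorem, the ratio $(f^{|B_i|})'(x_{K_i+1})/(f^{|B_i|})'(\xi_i)$ for some $\xi_i\in I^{(K_i+1)}$, hence is dominated by the distortion of $f^{|B_i|}$ on $I^{(K_i+1)}$. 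I apply Lemma~\ref{lem-dis} with $J=I^{(K_i+1)}$ and $I\supset J$ the union of $J$ with its two circle-adjacent $\cP_n$-elements: real bounds make these adjacent elements commensurable with $J$, giving the uniform well-inside constant $\tau$, and the iterates of $I$ belong to the combinatorial structure of the dynamical partition, giving bounded cover multiplicity. This produces a uniform segmental distortion bound $K=K(f)$, and collecting,
\[
(f^{q_n})'(x)\;\le\;\frac{|I^{(q_n)}|}{|I^{(0)}|}\cdot c_1^{N_c}\cdot K^{N_c+1}\;=:\;C_2(f).
\]

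The principal technical obstacle is verifying the hypotheses of the Distortion Lemma uniformly on each critical-free segment $B_i$: concretely, arranging the enlargement $I\supset J$ so that its iterates $f^s(I)$ for $s<|B_i|$ all stay critical-free, while the cover multiplicity of $\{f^s(I)\}_{s=0}^{|B_i|}$ and the well-inside constant $\tau$ at the terminal image remain uniform in $n,x,i$. The combinatorial control between consecutive critical passages, inherited from the continued fraction of $\alpha$, together with real bounds on $\cP_n$-adjacencies, is what makes such a uniform choice possible and yields constants depending only on $f$.
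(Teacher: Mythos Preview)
Your argument is precisely a detailed expansion of what the paper offers only as a one-line justification preceding the lemma --- that the bound follows from real bounds (Lemma~\ref{lem-realbounds}), the Distortion Lemma (Lemma~\ref{lem-dis}), and the local estimate at critical points (Lemma~\ref{lem-atcrit}); the telescoping decomposition into critical passages and critical-free segments is the standard way to assemble exactly these three ingredients, so your approach coincides with the paper's. The issue you flag in your final paragraph --- that the enlarged interval $I$ must itself remain critical-free along each segment --- is genuine, but is absorbed by the same bookkeeping: the circle-neighbours' own critical passages contribute at most $2N_c$ additional breakpoints (so one simply refines the segmentation), or equivalently one takes $I$ to be the maximal interval on which $f^{|B_i|}$ is a diffeomorphism and invokes real bounds to supply the terminal well-inside constant.
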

Again, the constant can be chosen independent of $f$ for $n$ large enough.

\section{Existence of automorphic measures}

In \cite{DY}, the authors proved the following.
\begin{theorem}[R. Douady, J.-C. Yoccoz]
Every $C^2$-smooth circle diffeomorphism with an irrational rotation number has a unique $s$-measure for any real $s$. 
\end{theorem}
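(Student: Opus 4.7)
The plan is to realize any $s$-measure as an eigenmeasure of the transfer operator
$$L_s\phi(x) := \bigl(f'(f^{-1}(x))\bigr)^{-s}\,\phi(f^{-1}(x)),\qquad \phi\in C(\bbR/\bbZ),$$
so that \eqref{eq-def} becomes the spectral equation $L_s^{*}\mu=\mu$. For \emph{existence}, I would first produce a positive eigenmeasure by applying the Schauder--Tychonoff fixed point theorem to the continuous self-map
$$\Phi(\mu):=\frac{L_s^{*}\mu}{(L_s^{*}\mu)(\bbR/\bbZ)}$$
of the weak-$*$ compact, convex set of Borel probability measures on the circle; the denominator $\int_{\bbR/\bbZ}(f'(f^{-1}(x)))^{-s}\,d\mu(x)$ is strictly positive since $f'>0$. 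A fixed point $\mu_0$ satisfies $L_s^{*}\mu_0=\lambda\mu_0$ for some $\lambda>0$. To identify $\lambda=1$, I would iterate the eigenequation and test against $\phi\equiv 1$, obtaining
$$\lambda^n = \int_{\bbR/\bbZ}\bigl((f^n)'(f^{-n}(x))\bigr)^{-s}\,d\mu_0(x).$$
Specializing to $n=q_k$, standard Denjoy distortion estimates for $C^2$ circle diffeomorphisms with irrational rotation number (together with the identity $\int_{\bbR/\bbZ}(f^{q_k})'\,dy=1$ and the real bounds on adjacent closest-return intervals) yield uniform two-sided bounds $1/C\le (f^{q_k})'(y)\le C$ independent of $y$ and $k$. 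Consequently $\lambda^{q_k}$ lies in a fixed positive interval, and $q_k\to\infty$ forces $\lambda=1$.

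For \emph{uniqueness}, given two $s$-measures $\mu_1,\mu_2$, I would set $\sigma=\tfrac12(\mu_1+\mu_2)$ (itself an $s$-measure) and write $h=d\mu_1/d\sigma\in[0,1]$. Subtracting the eigenequations for $\sigma$ and $\mu_1=h\sigma$, a short manipulation yields
$$\int_{\bbR/\bbZ}\bigl(f'(f^{-1}(x))\bigr)^{-s}\,\phi(f^{-1}(x))\,\bigl(h(x)-h(f^{-1}(x))\bigr)\,d\sigma(x)=0$$
for every $\phi\in C(\bbR/\bbZ)$; positivity of the weight then forces $h\circ f^{-1}=h$ $\sigma$-a.e. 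The support of $\sigma$ is a nonempty closed $f$-invariant subset of the circle, so the minimality of $f$ (Denjoy) gives $\operatorname{supp}\sigma=\bbR/\bbZ$. Comparing $\sigma$ with the unique $f$-invariant probability measure $\nu$ supplied by Denjoy's theorem and invoking the ergodicity of $(f,\nu)$, I would then upgrade the $f$-invariance of $h$ mod $\sigma$ to $\sigma$-a.e.\ constancy, concluding $\mu_1=\mu_2$.

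The main obstacle I expect is this last step of the uniqueness proof: an $f$-invariant measurable function is automatically $\nu$-a.e.\ constant by unique ergodicity, but promoting the same conclusion to $\sigma$-a.e.\ constancy requires a concrete comparison between the abstract eigenmeasure $\sigma$ and the intrinsically dynamical measure $\nu$ (for instance, mutual absolute continuity on open arcs, exploiting the uniform Denjoy bound on $(f^{q_k})'$ to propagate mass along closest-return orbits). The existence portion is comparatively straightforward once that uniform Denjoy bound is in hand.
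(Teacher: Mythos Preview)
This theorem is not proved in the paper; it is quoted from Douady--Yoccoz \cite{DY} and invoked as a black box in the proof of Theorem~\ref{th-exist}. There is therefore no proof here to compare your attempt against, so let me simply assess the proposal on its own.

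Your existence argument is sound. Schauder--Tychonoff produces an eigenmeasure with some eigenvalue $\lambda>0$, and the Denjoy--Koksma inequality applied to $\log f'$ (which has bounded variation since $f\in C^2$), together with $\int (f^{q_k})'\,dx=1$, yields the uniform two-sided bound on $(f^{q_k})'$ and forces $\lambda=1$ exactly as you describe.

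The uniqueness argument has the gap you yourself flag, and it is a real one. You correctly reduce to $h\circ f^{-1}=h$ $\sigma$-a.e.\ (after extending \eqref{eq-def} to bounded measurable test functions, which is routine). But passing from this to $\sigma$-a.e.\ constancy of $h$ is exactly the assertion that the quasi-invariant measure $\sigma$ is ergodic, and that is the entire content of uniqueness. You cannot import it from the ergodicity of the invariant measure $\nu$ without first knowing $\sigma\ll\nu$, and nothing in your setup supplies that comparison; for general (e.g.\ Liouville) rotation numbers it is not even clear that $\sigma$ and $\nu$ are mutually absolutely continuous. Your suggested repair---propagating mass along closest-return orbits using the uniform bound on $(f^{q_k})'$---is the right instinct, but carried out honestly it is not a comparison with $\nu$ at all: it becomes a direct Lebesgue-density-point proof that every $f$-invariant Borel set has $\sigma$-measure $0$ or $1$. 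That is precisely the strategy the present paper implements in the multicritical setting (Theorem~\ref{th-omega} and the ergodicity theorem following it), and the same argument works verbatim for $C^2$ diffeomorphisms.
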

This implies the following:
\begin{theorem}
\label{th-exist}
Every multicritical circle map with an irrational rotation number has an $s$-measure for any real $s$.
\end{theorem}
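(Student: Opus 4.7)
The plan is to obtain $\mu$ as a weak-$*$ limit of $s$-measures for a sequence of smooth circle diffeomorphisms $f_n$ approximating $f$, feeding the Douady--Yoccoz theorem for each $f_n$. First I would construct $f_n$ as $C^\infty$ circle diffeomorphisms with $\rho(f_n)=\rho(f)$ and $f_n\to f$ in $C^1$. The lift $F:\bbR\to\bbR$ of $f$ is globally $C^1$, because near each critical point $c$ of order $d$ one computes $f'(x)=d\,\psi'(x)|\psi(x)|^{d-1}$, continuous and vanishing at $c$. Mollifying the lift gives $F_\eps=F\ast\rho_\eps$, which respects the equivariance $F_\eps(x+1)=F_\eps(x)+1$ and descends to a smooth circle map $f_\eps$ with $f_\eps'=f'\ast\rho_\eps>0$ (since $f'\ge 0$ vanishes only at finitely many points); moreover $F_\eps\to F$ in $C^1$. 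Post-composing with a small rigid rotation $R_{\alpha_\eps}$, with $\alpha_\eps\to 0$ chosen so that $\rho(R_{\alpha_\eps}\circ f_\eps)=\rho(f)$ -- which exists by continuity of $\rho$ in $C^0$, monotonicity of $\alpha\mapsto\rho(R_\alpha\circ f_\eps)$, and the fact that the irrational $\rho(f)$ has a singleton level set under this map -- yields the desired $f_n$. By the Douady--Yoccoz theorem, each $f_n$ admits a unique $s$-measure $\mu_n$, and weak-$*$ compactness of Borel probability measures on $\bbR/\bbZ$ delivers a weak-$*$ limit $\mu$ along a subsequence.

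To conclude that $\mu$ is an $s$-measure for $f$, I would pass to the limit in the defining relation. For $s<0$ and $\phi\in C(\bbR/\bbZ)$,
$$\int_{\bbR/\bbZ}\phi\,d\mu_n = \int_{\bbR/\bbZ}\bigl(f_n'(f_n^{-1}(x))\bigr)^{-s}\phi(f_n^{-1}(x))\,d\mu_n.$$
Uniform convergence $f_n\to f$ forces $f_n^{-1}\to f^{-1}$ uniformly (standard for homeomorphisms of a compact space); combined with $f_n'\to f'$ uniformly, uniform continuity of $f'$ on $\bbR/\bbZ$, and continuity of $y\mapsto y^{-s}$ on $[0,\infty)$ (as $-s>0$), the integrand converges uniformly to $\bigl(f'(f^{-1}(x))\bigr)^{-s}\phi(f^{-1}(x))$. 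Together with the weak-$*$ convergence $\mu_n\to\mu$, this yields the identity \eqref{eq-def} for $\mu$. The case $s\ge 0$ is handled identically via \eqref{eq-def0}, recovering the existence part of \cite{FGN}.

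The only delicate point is the approximation step: producing smooth diffeomorphisms that converge to $f$ in $C^1$ while keeping rotation numbers irrational. The rotation-correction trick above is clean, but a simpler alternative would be to pick a subsequence $\eps_k\to 0$ along which $\rho(f_{\eps_k})$ merely happens to be irrational; equality with $\rho(f)$ is not actually needed for the limit argument, since only the uniform convergence of $f_n$ and $f_n'$ enters.
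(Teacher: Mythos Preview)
Your proof is correct and follows exactly the same approach as the paper: approximate $f$ in $C^1$ by smooth diffeomorphisms $f_n$ with irrational rotation number, apply the Douady--Yoccoz theorem to each $f_n$, and pass to a weak-$*$ accumulation point of the resulting $s$-measures; you simply supply details (the mollification of the lift, the rotation-number adjustment, the uniform convergence of the integrand in \eqref{eq-def}) that the paper leaves implicit. One small caveat on your closing alternative: there is no a priori reason the specific mollifications $f_\eps$ have irrational rotation number along \emph{any} sequence $\eps_k\to 0$ (rational level sets of $\rho$ can be open in one-parameter families), so some perturbation forcing irrationality---e.g.\ your rotation-correction step---is genuinely needed and cannot just be dropped.
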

\begin{proof}
Consider a multicritical circle map $f$, and let $f_n\to f$, $\rho(f_n)=\rho(f)$ be a sequence of $C^2$-smooth circle diffeomorphisms that tend to $f$, such that $f_n'\to f' $ uniformly.  

Due to the Douady-Yoccoz theorem, there exists an $s$-measure $\mu_n$ for a circle diffeomorphism  $f_n$. Extracting a weakly converging subsequence $\mu_n\to \mu$, and  passing to the limit in the relation \eqref{eq-def}, we can see that $\mu$ is a $s$-measure for $f$. 
\end{proof}

\section{$s$-measures are atomless}
\label{sec-atom}
\begin {lemma}
\label{lem-atoms}
For a multicritical circle map $f$ with irrational rotation number, for $s<0$, any $s$-measure $\mu_f$ is atomless.

\end{lemma}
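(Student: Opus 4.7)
The plan is to argue by contradiction: suppose $\mu_f$ has an atom at some point $p$, with $a:=\mu_f(\{p\})>0$. The idea is to use the invariance equation to propagate the mass of this atom along the forward orbit of $p$, obtaining an explicit formula for $\mu_f(\{f^k(p)\})$ in terms of $a$ and $(f^k)'(p)$. Since $\mu_f$ is a probability measure and the orbit of $p$ is infinite (as $\rho(f)$ is irrational), the resulting series must be summable; I will then use Lemma~\ref{lem-bounds-deriv} to contradict summability along the subsequence of closest return times $q_n$.

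The first step is to establish, for every $k\in\bbN$, the identity
\[
a \;=\; ((f^k)'(p))^{-s}\,\mu_f(\{f^k(p)\}).
\]
To do this, choose a sequence of continuous cutoff functions $\phi_j$ with $0\le\phi_j\le 1$ and $\phi_j\to \mathbf{1}_{\{p\}}$ pointwise, with supports shrinking to $\{p\}$. Substituting $\phi_j$ into the iterated relation \eqref{eq-minus1} and applying dominated convergence on both sides (the integrand on the right is dominated by the bounded continuous function $((f^k)'(f^{-k}(x)))^{-s}$, which remains bounded because $-s>0$) yields the desired identity. As a by-product, if any $f^j(p)$ with $0\le j\le k-1$ happens to be a critical point, then $(f^k)'(p)=0$ and $((f^k)'(p))^{-s}=0$, which immediately forces $a=0$, a contradiction. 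Hence we may assume the forward orbit of $p$ avoids the critical set and rewrite the identity as $\mu_f(\{f^k(p)\})=a\,((f^k)'(p))^{s}$.

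The second step combines this with the probability constraint. Since the orbit is injective and $\mu_f$ has total mass $1$, we must have
\[
a\sum_{k=0}^{\infty}\bigl((f^k)'(p)\bigr)^{s} \;\le\; 1.
\]
Because $s<0$, each term equals $1/((f^k)'(p))^{|s|}$, and Lemma~\ref{lem-bounds-deriv} provides the constant $C_2$ with $(f^{q_n})'(p)<C_2$ for all large $n$. Hence each term along the subsequence $k=q_n$ is bounded below by $C_2^{s}>0$, and the whole series diverges. This contradicts the displayed bound and therefore the existence of the atom.

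The main technical obstacle is the passage to the limit in the invariance equation with a discontinuous test function: one must control the integrand near the critical set, where the factor $(f'(f^{-1}(x)))^{-s}$ degenerates to $0$ (rather than blowing up, thanks to the sign of $-s$). Once this is handled via dominated convergence, the rest is just the observation that closest-return derivatives are bounded, which is exactly the content of Lemma~\ref{lem-bounds-deriv}.
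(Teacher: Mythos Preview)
Your proof is correct and follows essentially the same route as the paper: derive the relation $\mu_f(\{p\})=((f^k)'(p))^{-s}\mu_f(\{f^k(p)\})$, dispose of the case where the forward orbit meets the critical set, and otherwise sum the resulting atomic masses and invoke Lemma~\ref{lem-bounds-deriv} to force divergence. The only difference is that you spell out the dominated-convergence justification for applying \eqref{eq-minus1} to the indicator $\mathbf{1}_{\{p\}}$, which the paper leaves implicit.
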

\begin{proof}
The proof is similar to  \cite[Lemma 3.8]{GY23} where the statement was proved for $(-1)$-measures.
The definition of $s$-measures implies
\begin{equation}
\label{eq-atoms}
\mu_f(\{p\}) = \mu_f(\{f^k(p)\})\cdot ((f^k)'(p))^{-s}
\end{equation} 
for any $k>0$, $p\in \bbR/\bbZ$. 

If $p$ is not a preimage of a critical point, 
\begin{equation}
\label{eq-meas-series}
1\ge \mu_f ( \{p, f(p), f^2(p), \dots\}) = \mu_f(\{p\}) \cdot \left( 1+\sum_{k=1}^{\infty}((f^{k})'(p))^s\right).
                     \end{equation}
This series diverges due to Lemma \ref{lem-bounds-deriv}, since $s<0$. Thus $\mu_f(\{p\})=0$. 
If $p$ is a preimage of a critical point, \eqref{eq-atoms} immediately implies $\mu_f(\{p\})=0$. 
\end{proof}

\section{Uniqueness of automorphic measures with negative exponents }

We will prove that for $s<0$,  any $s$-measure $\mu$ is ergodic: any invariant measurable set $B$ has measure $0$ or $1$. Since $s$-measures clearly form a convex set, this will imply uniqueness of the $s$-measure. The proof closely follows \cite{FGN}.

Suppose that $B$ is an invariant measurable set and set $$\omega_B(I) = \mu(I\cap B) |I|^{-s}.$$
We will omit the index $B$ until it becomes relevant.

 We will write $ I \succcurlyeq J$ if $\omega(I)>c \omega(J)$ for a constant $c$ that may depend on $f$ and $s$ but not on $B$ or $\mu$. We will write $I\sim J$ if $I \succcurlyeq J$ and $J \succcurlyeq I$. 
 Note that for $I\subset J$, we have $$\omega(I) = \mu(I\cap B) |I|^{-s} \le \mu (J\cap B)|J|^{-s} = \omega(J),$$ thus this function is monotonic; this is a simplification in comparison with the case of  $s$-measures with positive $s$.

For an interval $\Delta\in \mathcal P_n$, let $\Delta^*$ be its union with adjacent intervals of $\mathcal P_{n}$. 
\begin{definition} An integer $0\le k \le q_{n+1}$ is \emph{a critical time} for $\Delta\in \mathcal P_n$ if the interval  $f^k(\Delta^*)$ contains a critical point of $f$.
\end{definition}

Relation \eqref{eq-minus1}, applied to $\phi = \chi_{I\cap B}$, implies that
$$
\mu(f^k(I)\cap B) \max ((f^k)'|_I)^{-s}\ge \mu(I\cap B) \ge  \mu (f^k(I)\cap B) \min ((f^k)'|_I)^{-s}.
$$ 
thus 
\begin{equation}
\label{eq-est}
\omega(f^k(I))\cdot  (\max (f^k)'|_I)^{-s} \cdot \frac{|I |^{-s}}{|f^k(I)|^{-s}}\ge \omega(I) \ge  \omega (f^k(I))\cdot  \min ((f^k)'|_I)^{-s} \cdot \frac{|I |^{-s}}{|f^k(I)|^{-s}}.
\end{equation}

\begin{lemma}
\label{lem-est}
For any multicritical circle map $f$ with irrational rotation number, for sufficiently large $n>n_0(f)$, for any $\Delta\in \mathcal P_n$ and its images $f^k(\Delta)$, $f^l(\Delta)$ with $0\le k,l\le q_{n+1}$, if there is no critical time for $\Delta$ between $k$ and $l$, then $f^k(\Delta)\sim f^l(\Delta)$. 

If $0\le k \le q_{n+1}$ is a critical time, then $f^{k+1}(\Delta) \succcurlyeq  f^k(\Delta)$.
\end{lemma}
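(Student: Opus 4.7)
The plan is to treat the two claims separately, with \eqref{eq-est} serving as the common bridge that transports the quantity $\omega$ between $f^k(\Delta)$ and $f^l(\Delta)$, and with bounded multiplicity of the iterates of $\Delta^*$ coming from the structure of the dynamical partition $\mathcal P_n$.

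For the first claim, assume without loss of generality that $k<l$. By hypothesis, the intervals $f^{k}(\Delta^*),f^{k+1}(\Delta^*),\ldots,f^{l-1}(\Delta^*)$ contain no critical points, and the standard combinatorics of $\mathcal P_n$ show that their multiplicities on the circle are bounded by an absolute constant. Real Bounds (Lemma~\ref{lem-realbounds}) guarantee that for $n\ge n_0$ the interval $f^l(\Delta)$ is $\tau$-well inside $f^l(\Delta^*)$, with $\tau>0$ depending only on $C_1$. The Distortion Lemma (Lemma~\ref{lem-dis}) therefore bounds the distortion of $f^{l-k}$ on $f^k(\Delta)$ by a constant $K$, and the mean value theorem gives
$$
\max_{f^k(\Delta)}(f^{l-k})'\;\asymp\;\min_{f^k(\Delta)}(f^{l-k})'\;\asymp\;\frac{|f^l(\Delta)|}{|f^k(\Delta)|}.
$$
Feeding this into~\eqref{eq-est} with $I=f^k(\Delta)$, the factors $(\max)^{-s}$ and $(\min)^{-s}$ combined with $(|f^k(\Delta)|/|f^l(\Delta)|)^{-s}$ are pinched between two positive constants, so $\omega(f^k(\Delta))\sim\omega(f^l(\Delta))$.

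For the second claim, I would apply the defining relation~\eqref{eq-def} to the test function $\phi=\chi_{f^{k+1}(\Delta)\cap B}$. Since $B$ is $f$-invariant and $f$ is a homeomorphism, this rewrites as
$$
\mu(f^k(\Delta)\cap B)\;=\;\int_{f^{k+1}(\Delta)\cap B}\bigl(f'(f^{-1}(x))\bigr)^{-s}\,d\mu(x).
$$
Because $k$ is a critical time, $f^k(\Delta^*)$ contains a critical point; provided $n$ is large enough that $f^k(\Delta^*)$ lies inside the neighborhood $U$ supplied by Lemma~\ref{lem-atcrit}, that lemma together with Real Bounds yields
$$
f'(y)\;\le\;c\,\frac{|f^{k+1}(\Delta^*)|}{|f^k(\Delta^*)|}\;\le\;c'\,\frac{|f^{k+1}(\Delta)|}{|f^k(\Delta)|}\qquad\text{for every }y\in f^k(\Delta).
$$
Raising to the power $-s>0$ and inserting above gives $\mu(f^k(\Delta)\cap B)\le C(|f^{k+1}(\Delta)|/|f^k(\Delta)|)^{-s}\mu(f^{k+1}(\Delta)\cap B)$, and multiplying by $|f^k(\Delta)|^{-s}$ turns this into $\omega(f^k(\Delta))\le C\,\omega(f^{k+1}(\Delta))$, i.e.\ $f^{k+1}(\Delta)\succcurlyeq f^k(\Delta)$.

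The main obstacle is the bookkeeping: verifying the bounded-multiplicity hypothesis of the Distortion Lemma for the orbit $\{f^j(\Delta^*)\}$ and ensuring that each passage between $\Delta$ and $\Delta^*$ via Real Bounds is harmless. Once these are in place, both halves reduce to direct manipulation of~\eqref{eq-def}/\eqref{eq-est} using the bounds already supplied by Lemmas~\ref{lem-atcrit}, \ref{lem-realbounds}, and~\ref{lem-dis}.
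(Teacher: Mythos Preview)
Your proof is correct and follows essentially the same route as the paper: Distortion Lemma plus \eqref{eq-est} for the non-critical stretch, and Lemma~\ref{lem-atcrit} plugged into the invariance relation for the critical step. Two minor points: the test function giving your displayed identity is $\phi=\chi_{f^{k}(\Delta)\cap B}$ (not $\chi_{f^{k+1}(\Delta)\cap B}$), and the detour through $\Delta^*$ and Real Bounds in the second part is unnecessary---since $f^k(\Delta)\subset f^k(\Delta^*)\subset U$, you may apply Lemma~\ref{lem-atcrit} directly with $I=f^k(\Delta)$ to obtain $f'(y)\le c\,|f^{k+1}(\Delta)|/|f^k(\Delta)|$, which is exactly what the paper does.
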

\begin{proof}
The first statement follows from the Distortion lemma (Lemma \ref{lem-dis}) and \eqref{eq-est}. 

For the second statement, let $I=f^k(\Delta)$. On a small neighborhood $U$ of each critical point, we have an upper estimate on the derivative $f'(x)\le c \frac{|f(I)|}{|I|}$ for any $x\in I\subset U$ due to Lemma \ref{lem-atcrit}. If $n>n_0(f)$ and $k$ is a critical time, then the segment  $f^k(\Delta^*)$ is small enough to belong to this neighborhood. 
Thus \eqref{eq-est} implies for $I= f^k(\Delta)$: 
$$
c^{-s} \omega(f(I))\ge \omega(f(I)) \cdot  \max (f'|_I)^{-s} \cdot \frac{|I |^{-s}}{|f(I)|^{-s}} \ge \omega(I)  \qquad \text{ i.e. } \qquad f(I) \succcurlyeq  I
$$
\end{proof}

Note that the inequality $ \succcurlyeq$ in the last formula points in the other direction than for $s$-measures with positive $s$ in \cite{FGN}.  
However, an analogue of Theorem 5.12 of \cite{FGN} holds true:
\begin{theorem}
\label{th-omega}
For any multicritical circle map $f$ with irrational rotation number, for sufficiently large $n>n_0(f)$, for any  $\Delta_1, \Delta_2 \in \mathcal P_n$, we have:
\begin{itemize}
 \item  If $\Delta_1$ is a long interval and $\Delta_2$ is a short interval of $\mathcal P_n$, then
 $\Delta_1 \succcurlyeq\Delta_2;$
 \item If $\Delta_1, \Delta_2 $ are both long intervals or both short intervals of $\mathcal P_n$, then 
$\Delta_1\sim \Delta_2$.

 \end{itemize}
 
\end{theorem}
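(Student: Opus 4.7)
The plan is to adapt the argument of \cite[Theorem 5.12]{FGN}, taking into account two features of the present setting: the critical-time inequality in Lemma~\ref{lem-est} points in the opposite direction to that of the positive-$s$ case, and the monotonicity of $\omega$ (which holds because $s<0$) provides a shortcut for the long-vs-short comparison. The preliminary observation is that for any $\Delta \in \mathcal{P}_n$ with $n \ge n_0$, the family $\{f^k(\Delta^*)\}_{k=0}^{q_{n+1}}$ covers the circle with bounded multiplicity $M=M(f)$ by Lemma~\ref{lem-realbounds}; since $f$ has only finitely many ($N$) critical points, the number of critical times for $\Delta$ is at most $R := MN$, uniformly in $n$ and $\Delta$.

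For the within-class equivalences, take long intervals $\Delta_1 = I_n^{j_1}, \Delta_2 = I_n^{j_2}$ with $j_1 < j_2$ and chain Lemma~\ref{lem-est} along the forward orbit from $\Delta_1$ to $\Delta_2$: non-critical steps contribute $\sim$-equivalences and each of the at most $R$ critical steps contributes a one-sided factor, yielding $\omega(\Delta_2) \succcurlyeq \omega(\Delta_1)$. The reverse direction $\omega(\Delta_1) \succcurlyeq \omega(\Delta_2)$ is the main obstacle, since the critical-time inequality has no upper-bound counterpart for $\omega(f(I))$ in terms of $\omega(I)$. I propose to close the loop by continuing the orbit to reach $f^{q_{n+1}}(\Delta_1)$, which by the closest-return dynamics lies within distance $O(|I_{n+1}|) \ll |\Delta_1|$ of $\Delta_1$ and is commensurable to it in length. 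Using the standard bounded-distortion results for $f^{q_{n+1}}$ on intervals of size $\sim |I_n|$ together with the monotonicity of $\omega$ to handle the critical-iterate contributions, and the integral representation~\eqref{eq-minus1} applied with $k = q_{n+1}$, one obtains $\omega(\Delta_1) \sim \omega(f^{q_{n+1}}(\Delta_1))$; chaining with the forward estimate from $\Delta_2$ to $f^{q_{n+1}}(\Delta_1)$ then delivers the reverse bound. The short-short case is handled analogously using the orbit of $I_{n+1}$ and replacing $q_{n+1}$ by $q_n$.

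Finally, the long-vs-short comparison exploits the refinement structure $\mathcal{P}_{n+1} \prec \mathcal{P}_n$: each long interval $I_n \in \mathcal{P}_n$ contains exactly $a_{n+2} \ge 1$ sub-intervals $I_{n+1}^{k}$ (long elements of $\mathcal{P}_{n+1}$ that fall strictly inside $I_n$) with $k \in [q_n, q_{n+2}-1]$. Picking any such $I_{n+1}^{k_0} \subset I_n$, within-class equivalence at level $n+1$ (established by the same argument one level up) gives $\omega(I_{n+1}^{k_0}) \sim \omega(I_{n+1})$, while monotonicity of $\omega$ gives $\omega(I_{n+1}^{k_0}) \le \omega(I_n)$. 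Combined with the within-class equivalences at level $n$, which relate arbitrary long and short intervals of $\mathcal{P}_n$ to the representatives $I_n$ and $I_{n+1}$, this yields $\Delta_1 \succcurlyeq \Delta_2$ for any long $\Delta_1$ and short $\Delta_2 \in \mathcal{P}_n$. The technical heart of the proof, and my expected main obstacle, is the loop-closing step: controlling $\omega(\Delta_1)$ versus $\omega(f^{q_{n+1}}(\Delta_1))$ in the presence of critical iterates, where the one-sided nature of the critical-time inequality must be compensated by a combination of bounded-distortion estimates and the monotonicity of $\omega$.
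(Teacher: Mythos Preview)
Your loop-closing step is the gap, and the paper's proof shows that the fix is to reverse the order of the two bullets. You propose to establish the within-class equivalences first and then deduce long-vs-short from them; the paper does long-vs-short first and uses it to close the loop for long-vs-long. Concretely: for $\Delta_1$ long and $\Delta_2$ short, chaining Lemma~\ref{lem-est} forward gives $\Delta_1\succcurlyeq I_n$ and $I_{n+1}^{q_n}\succcurlyeq\Delta_2$; the containment $I_{n+1}^{q_n}\subset I_n$ and monotonicity of $\omega$ then yield $\Delta_1\succcurlyeq\Delta_2$ with no two-sided estimate needed. With long-vs-short in hand, the paper closes the long-vs-long loop via $I_n^{q_{n+1}}\subset I_n\cup I_{n+1}$: monotonicity gives $\omega(I_n^{q_{n+1}})\le \omega(I_n\cup I_{n+1})\le C(\omega(I_n)+\omega(I_{n+1}))$, and the already-proved inequality $I_n\succcurlyeq I_{n+1}$ absorbs the $\omega(I_{n+1})$ term. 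Short-vs-short then follows because the short intervals of $\mathcal P_n$ are among the long intervals of $\mathcal P_{n+1}$.

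Your direct attempt at $\omega(\Delta_1)\sim\omega(f^{q_{n+1}}(\Delta_1))$ cannot be carried out as written: applying \eqref{eq-minus1} with $k=q_{n+1}$ gives only $\omega(I_n)\le C\,\omega(I_n^{q_{n+1}})$ via the uniform derivative bound of Lemma~\ref{lem-bounds-deriv}, while the reverse inequality would require a positive lower bound on $(f^{q_{n+1}})'|_{I_n}$, which fails whenever a critical point lies on the orbit (and $0\in I_n$ is itself typically critical). Invoking ``monotonicity of $\omega$'' here ultimately leads you back to controlling $\omega(I_{n+1})$ by $\omega(I_n)$, i.e.\ to long-vs-short; but your long-vs-short argument in turn invokes within-class equivalence at level $n+1$, producing an infinite regress in which the implicit constants are not controlled. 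Reordering as in the paper removes the obstacle entirely.
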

\begin{proof}
Let $\Delta_1$ be long and $\Delta_2$ be short. Use Lemma \ref{lem-est} along a sequence of iterates of the interval $I_n=[0, f^{q_n}(0)]$ that contains $\Delta_1$. Note that if we have $d$ critical points, then we have at most $3d$ critical times in this chain, since $\bigcup_{\Delta \in \mathcal P_n} \Delta^*$ covers the circle at most three times. Thus $$\Delta_1 \succcurlyeq  I_n.$$ Similarly, $$I^{q_{n}}_{n+1}  \succcurlyeq \Delta_2.$$ Since $I_n\supset I^{q_n}_{n+1}$, we have $$\omega(I_n)\ge \omega(I^{q_n}_{n+1})\text{ and thus  }\Delta_1  \succcurlyeq \Delta_2.$$   

Let $\Delta_1, \Delta_2$ be long intervals. Similarly, we have  $$\Delta_1 \succcurlyeq  I_n\text{ and }I^{q_{n+1}}_{n}  \succcurlyeq \Delta_2 .$$ We have $I_n \succcurlyeq I_{n+1}$ due to the first statement of the theorem, the intervals $I_n$ and $I_{n+1}$ have comparable length, and $I_n\cup I_{n+1}\supset I^{q_{n+1}}_{n}$, which implies $$I_n \succcurlyeq I^{q_{n+1}}_{n}\text{ and thus }\Delta_1 \succcurlyeq \Delta_2.$$

Since long intervals of $\mathcal P_n$ are short intervals of $\mathcal P_{n+1}$, the statement for short intervals follows.  

\end{proof}

\begin{theorem}
For any multicritical circle map $f$ with irrational rotation number and $s<0$, any $s$-measure $\mu$ is ergodic.
\end{theorem}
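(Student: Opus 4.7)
The plan is to prove that any $f$-invariant Borel set $B$ satisfies $\mu(B)\in\{0,1\}$; together with the convexity of the space of $s$-measures this gives uniqueness (via the standard Radon--Nikodym trick: setting $\mu=\frac12(\mu_1+\mu_2)$ and $\rho=d\mu_1/d\mu$, the $s$-invariance of $\mu_1$ and of $\mu$ forces $\rho\circ f=\rho$ almost everywhere, so ergodicity of $\mu$ makes $\rho$ constant and $\mu_1=\mu_2$).

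The main step is to extract from Theorem~\ref{th-omega} the uniform density bound
\[
\mu(B\cap\Delta)\ge c\,\mu(B)\,\mu(\Delta) \qquad\text{for every }\Delta\in\mathcal P_n,\ n\ge n_0,
\]
with $c>0$ depending only on $f$ and $s$. The real bounds (Lemma~\ref{lem-realbounds}) together with bounded distortion (Lemma~\ref{lem-dis}) make all intervals of $\mathcal P_n$ commensurate in length, so $\omega_B(\Delta)$ is proportional to $\mu(B\cap\Delta)$ up to a factor depending only on $n$; Theorem~\ref{th-omega} then translates into the analogous $\sim$ and $\succcurlyeq$ relations for $\mu(B\cap\cdot)$. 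For a long $\Delta_0\in\mathcal P_n$, long-long equivalence combined with long-dominates-short gives $\mu(B\cap\Delta)\le C\,\mu(B\cap\Delta_0)$ for every $\Delta\in\mathcal P_n$, and summing yields
\[
\mu(B)=\sum_{\Delta\in\mathcal P_n}\mu(B\cap\Delta)\le C\,\mu(B\cap\Delta_0)\,|\mathcal P_n|.
\]
Applying the same inequality to the trivial invariant set $B=\bbR/\bbZ$ gives $\mu(\Delta_0)\le C'/|\mathcal P_n|$, and dividing produces the density bound for long $\Delta_0$. A short interval of $\mathcal P_n$ is a long interval of $\mathcal P_{n+1}$ (as noted in the proof of Theorem~\ref{th-omega}), so the bound extends to every $\Delta\in\mathcal P_n$.

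The conclusion uses martingale convergence. By Lemma~\ref{lem-atoms} the measure $\mu$ is atomless, the sequence $\{\mathcal P_n\}$ is refining, and its cells shrink to points, so $\bigvee_n\sigma(\mathcal P_n)$ coincides with the Borel $\sigma$-algebra modulo $\mu$-null sets. Doob's theorem then gives $\mu(\Delta^{(n)}(x)\cap B)/\mu(\Delta^{(n)}(x))\to\chi_B(x)$ for $\mu$-a.e.\ $x$, where $\Delta^{(n)}(x)\in\mathcal P_n$ is the cell containing $x$. Assuming for contradiction $0<\mu(B)<1$ and picking $x\in B^c$ at which this limit equals $0$ contradicts the uniform lower bound $c\,\mu(B)>0$ established above, so $\mu(B)\in\{0,1\}$. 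I anticipate the main technical obstacle to be the transition from Theorem~\ref{th-omega} to the density bound: the relation ``long $\succcurlyeq$ short'' is only a one-sided domination, which is why one must invoke both levels $\mathcal P_n$ and $\mathcal P_{n+1}$ in order to cover short intervals uniformly.
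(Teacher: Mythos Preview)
Your argument contains a genuine gap at its core step. The assertion that ``the real bounds together with bounded distortion make all intervals of $\mathcal P_n$ commensurate in length'' is false for multicritical maps. Lemma~\ref{lem-realbounds} only guarantees that \emph{adjacent} atoms of $\mathcal P_n$ are commensurable; iterates $I_n^k=f^k(I_n)$ that pass near a critical point of order $d$ get compressed by a factor comparable to $|I_n|^{d-1}$, so for large $n$ the lengths of long intervals (and likewise short ones) are \emph{not} uniformly comparable, especially when $\rho(f)$ is of unbounded type. Consequently you cannot strip the factor $|\Delta|^{-s}$ from $\omega_B(\Delta)$ and transfer the $\sim$ and $\succcurlyeq$ relations from $\omega_B$ to $\mu(B\cap\cdot)$. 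Your subsequent chain --- $\mu(B\cap\Delta)\le C\,\mu(B\cap\Delta_0)$, then summing, then ``the same inequality with $B=\bbR/\bbZ$ gives $\mu(\Delta_0)\le C'/|\mathcal P_n|$'' --- collapses once the length comparability is removed (and note that, as written, the ``same inequality'' applied to $B=\bbR/\bbZ$ gives $1\le C|\mathcal P_n|\,\mu(\Delta_0)$, a lower bound on $\mu(\Delta_0)$, not the upper bound you need).

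The paper's proof avoids this obstacle by never separating $\mu(B\cap\Delta)$ from $|\Delta|^{-s}$. It works with the quotient $\omega_B(\Delta)/\omega_{S^1}(\Delta)=\mu(B\cap\Delta)/\mu(\Delta)$, in which the length factor cancels. Theorem~\ref{th-omega} applied simultaneously to $\omega_B$ and $\omega_{S^1}$ shows this quotient is uniformly comparable over all long intervals of $\mathcal P_n$; starting from a single long interval where the quotient is $<\eps$ (supplied by a Lebesgue density point of $B^c$), one spreads the bound to every long $\Delta$, sums to get $\mu(B\cap\text{Long}_n)<\tilde C\eps$, and handles short intervals by repeating the argument at level $n+1$. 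Your martingale endgame would be a legitimate substitute for the density-point step, but only after the uniform density bound is correctly established; the route through ratios $\omega_B/\omega_{S^1}$ is what is missing from your derivation.
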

\begin{proof}
Assume the contrary, and let $B$ be an invariant set of an intermediate measure.  
Due to the Lebesgue density point theorem,  for any $\eps>0$, we can choose $x$ such that 
$$
\frac{\mu(I(x)\cap B)}{\mu(I(x))}<\eps 
$$
for any sufficiently short interval $x\in I(x)$ such that it has bounded eccentricity. Let $\Delta_n\in \mathcal P_n$ be such that $x\in \Delta_n$; due to commensurability of neighboring intervals of $\mathcal P_n$, we can take $\Delta^*_n$ to be $I(x)$, and thus, for sufficiently large $n$,
$$
\mu(\Delta^*_n\cap B) |\Delta^*_n|^{-s}< \eps \mu (\Delta^*_n) |\Delta^*_n|^{-s},
$$
i.e. $$\omega_B(\Delta^*_n) < \eps \, \omega_{S^1}(\Delta^*_n).$$

Let $\Delta^*_n = J_1 \cup J_2 \cup J_3$ where $J_1$ is a long interval of $\mathcal P_n$. Theorem \ref{th-omega} implies $$J_1 \succcurlyeq J_2, \;J_1 \succcurlyeq J_3$$ with respect to $\omega_{S^1}$; note also that these intervals have commensurable lengths since they are neighbors in the partition  $\mathcal P_n$. Together with monotonicity of $\omega_B$, this implies $$\omega_B(J_1) < C\, \eps \, \omega_{S^1}(J_1)$$
for a uniform $C=C(f, s)$ and a long interval $J_1\in \mathcal P_n$.  
Theorem \ref{th-omega}, applied to $\omega_B$ and $\omega_{S^1}$,  gives
$$\omega_B(\Delta) < \tilde C\,\eps \,\omega_{S^1}(\Delta)$$
for a uniform $\tilde C=\tilde C(f, s)$ and for any long interval $\Delta\in \mathcal P_n$,  thus 
$$
\mu(\Delta\cap B) < \tilde C\,\eps\, \mu(\Delta).$$ Since the same arguments apply to $\mathcal P_{n+1}$, the inequality holds for any short interval   $\Delta\in \mathcal P_n$. 
Since these intervals cover the circle, adding these inequalities, we get 
$$\mu(B) < \tilde C\, \eps \, \mu(S^1)=\tilde C\, \eps,$$
and since this holds for any $\eps$ with uniform $\tilde C$, we have $\mu(B)=0$.  

The contradiction shows that $\mu$ is ergodic.

\end{proof}

As mentioned above, since $s$-measures form a convex set, ergodicity of every $s$-measure implies uniqueness of $s$-measures for multicritical circle maps. Together with existence of $s$-measures (Theorem \ref{th-exist}), this completes the proof of Theorem \ref{th-main}.

\section{Differentiability  of limits in Banach spaces}

The next theorem is a suitable version of a standard result: uniform convergence of derivatives, with pointwise convergence of functions, implies differentiability of the limit.  Recall that a star domain is a set that contains zero and includes an interval $[0, w]$ together with each its point $w$.
\begin{theorem}
\label{th-banach}
Let $V$ be a Banach space, let $S\subset V$ be an open subset such that $\overline S$ is a star domain. 
Suppose that  continuous functions $\tau_n\colon \overline S \to \bbR$ converge pointwise on $\overline S$  to a continuous function $\tau\colon \overline S\to \bbR$. Assume that each function $\tau_n$ is Gateaux differentiable on a neighborhood of $\overline S$,  with differential  at any point $v$ being a bounded linear functional $d|_v \tau_n$. Suppose that there exists a bounded linear functional $L$ such that for any sequence $v_n\to 0$, $v_n\in \overline S$, the linear functionals $d|_{v_n}\tau_n$ converge in the operator norm to $L$. 

Then $\tau$ is Frechet differentiable at $0$, with derivative equal to $L$.  
\end{theorem}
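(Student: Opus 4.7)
The plan is to combine three ingredients: (i) a uniformization of the convergence of the differentials, (ii) the one-variable mean value theorem applied to $t\mapsto\tau_n(tv)$ on the segment $[0,v]\subset\overline S$ (available by the star-domain hypothesis), and (iii) the pointwise convergence $\tau_n\to\tau$.

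First, I would extract from the hypothesis the following uniform statement: for every $\eps>0$ there exist $N\in\bbN$ and $\delta>0$ such that $\|d|_w\tau_n - L\|<\eps$ (operator norm) for every $n\ge N$ and every $w\in\overline S$ with $\|w\|<\delta$. This is a diagonal argument by contradiction: if it failed for some $\eps>0$, one produces a strictly increasing sequence $n_k\to\infty$ and points $w_k\in\overline S$ with $\|w_k\|\to 0$ and $\|d|_{w_k}\tau_{n_k}-L\|\ge\eps$, and then assembles a single sequence $v_n\in\overline S$ by $v_{n_k}=w_k$ and $v_n=0$ otherwise. Then $v_n\to 0$ in $V$, yet the convergence $d|_{v_n}\tau_n\to L$ fails along the subsequence $n_k$, contradicting the hypothesis.

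Given such $\eps,\delta,N$ and any $v\in\overline S$ with $0<\|v\|<\delta$, the map $\varphi_n(t)=\tau_n(tv)$ is continuous on $[0,1]$ and, by Gateaux differentiability on a neighborhood of $\overline S$, differentiable on $(0,1)$ with derivative $d|_{tv}\tau_n(v)$. The classical mean value theorem then gives $t_{n,v}\in(0,1)$ with
$$\tau_n(v)-\tau_n(0) = d|_{t_{n,v}v}\tau_n(v).$$
Subtracting $L(v)$ and using the uniform estimate of the previous step (applicable because $\|t_{n,v}v\|<\delta$), one obtains
$$|\tau_n(v)-\tau_n(0)-L(v)|\le \|d|_{t_{n,v}v}\tau_n-L\|\cdot\|v\|\le \eps\,\|v\|$$
for every $n\ge N$.

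The last step is to fix $v$ and let $n\to\infty$: pointwise convergence $\tau_n\to\tau$ passes to the limit in the previous inequality (the right-hand side does not depend on $n$), yielding $|\tau(v)-\tau(0)-L(v)|\le\eps\,\|v\|$ for every $v\in\overline S$ with $\|v\|<\delta$. Since $\eps$ was arbitrary, this is exactly Frechet differentiability of $\tau$ at $0$ with derivative $L$. The main technical point is the first step — upgrading the pointwise-in-sequences hypothesis to a neighborhood-uniform bound — as this is the only place where the joint variation of the index $n$ and of the base point of the differential genuinely intervenes; the remainder is a textbook mean value estimate together with a standard pointwise-limit passage.
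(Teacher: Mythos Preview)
Your proof is correct and follows essentially the same route as the paper: uniformize the convergence of differentials near $0$, apply a mean-value estimate along the segment $[0,v]\subset\overline S$, and pass to the pointwise limit. The only cosmetic difference is that the paper writes the mean-value step as the integral identity $\tau_n(v)=\tau_n(0)+\int_0^1 d|_{tv}\tau_n(v)\,dt$ rather than invoking the one-variable mean value theorem, and it asserts the uniformization step without spelling out the diagonal argument you give.
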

\begin{proof}
For any $\eps>0$, find a ball $B_\eps$ centered at zero and $N_\eps\in \bbN$ such that  
\begin{equation*}
\| d|_{v} \tau_n -  L\|<\eps \qquad \text{ for any }v\in B_\eps\cap \overline S, \, n>N_\eps.
\end{equation*}
 Then we have  due to Gateaux differentiability of $\tau_n$ on $[0,v]\subset \overline S$:
 $$\tau_n (v)= \tau_n(0) +  \int_{0}^1 d|_{tv} \tau_n (v) dt,$$
 thus
 \begin{equation*}
 \left|\tau_n(v) - \tau_n(0) -  L v \right| \le  \eps \|v\|  \qquad \text{ for any }v\in B_\eps\cap \overline S, \, n>N_\eps.
 \end{equation*}
Passing to the limit as $n\to \infty$, we get 
 \begin{equation*}
 \left|\tau(v) - \tau(0) -  L v \right| \le  \eps \|v\|  \qquad \text{ for any }v\in B_\eps\cap \overline S.
 \end{equation*}
Thus $\tau$ is Frechet differentiable at $0$, with derivative given by $L$.
\end{proof}

The following theorem shows that weak convergence of measures implies convergence of the corresponding linear functionals in the Banach space of $C^1$-smooth functions (in the operator norm). It will be used to establish convergence of linear functionals, as needed for Theorem \ref{th-banach}. 
\begin{definition}
The Kantorovich-Rubinstein distance (a.k.a. Wasserstein distance $W_1$) between probability measures on $\bbR/\bbZ$ is given by $$\distkr (\mu, \nu) = \sup_{v\in 1\text{-\rm Lip}} \int v (d\mu-d\nu)$$
where $1\text{-\rm Lip}$ is the set of Lipschitz functions with Lipschitz constant $1$. 
\end{definition}
\begin{theorem}
\label{th-KR}
\begin{itemize}
\item The Kantorovich-Rubinstein distance metrizes weak convergence on $\bbR/\bbZ$:  convergence of probability measures in this metric is equivalent to weak convergence. 
\item If $\distkr(\mu_n, \mu)\to 0$ then in the Banach space of $C^1$-smooth vector fields on $\bbR/\bbZ$,  linear functionals defined by $\mu_n  $ converge in the operator norm to the  linear functional defined by $\mu$.
\end{itemize}
\end{theorem}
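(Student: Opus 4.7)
The plan is to handle the two items separately. Item (1) is the standard Kantorovich--Rubinstein duality statement on a compact metric space, while item (2) follows essentially from the definition of $\distkr$ together with the observation that the $C^1$-norm controls the Lipschitz constant.

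For item (1), the easier direction is that $\distkr(\mu_n,\mu)\to 0$ implies weak convergence. I would exploit the density of Lipschitz functions in $C(\bbR/\bbZ)$ (immediate from Stone--Weierstrass, since the Lipschitz functions form a subalgebra that contains the constants and separates points): given a continuous test function $\xi$ and $\delta>0$, I pick a Lipschitz $v$ with $\|\xi-v\|_\infty<\delta$ and Lipschitz constant $K$; then
\[
\Bigl|\int \xi\,(d\mu_n-d\mu)\Bigr|\le 2\delta+K\,\distkr(\mu_n,\mu),
\]
whose $\limsup$ is at most $2\delta$. Since $\delta$ is arbitrary, weak convergence follows. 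For the converse, I would argue by contradiction using Arzel\`a--Ascoli. Assume $\mu_n\to\mu$ weakly but $\distkr(\mu_n,\mu)\not\to 0$; then along a subsequence there exist $1$-Lipschitz functions $v_n$ with $\int v_n\,(d\mu_n-d\mu)>\eps$ for some fixed $\eps>0$. Replacing $v_n$ by $v_n-v_n(0)$, which does not affect these integrals since both measures are probabilities, the family $\{v_n\}$ is equicontinuous and uniformly bounded on the compact $\bbR/\bbZ$, hence by Arzel\`a--Ascoli a further subsequence converges uniformly to a $1$-Lipschitz limit $v$. The decomposition
\[
\int v_n\,(d\mu_n-d\mu)=\int (v_n-v)\,d\mu_n+\int v\,(d\mu_n-d\mu)+\int (v-v_n)\,d\mu
\]
then tends to $0$: the first and third terms vanish because $\|v_n-v\|_\infty\to 0$ and $\mu_n,\mu$ are probability measures, and the middle term vanishes by weak convergence applied to the continuous function $v$. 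This contradicts the lower bound $\eps$.

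For item (2), the point is that any $C^1$-smooth vector field $w$ on $\bbR/\bbZ$ with $\|w\|_{C^1}\le 1$ satisfies $\|w'\|_\infty\le 1$, and is therefore a $1$-Lipschitz function; by the definition of $\distkr$,
\[
\Bigl|\int w\,(d\mu_n-d\mu)\Bigr|\le \distkr(\mu_n,\mu).
\]
Taking the supremum over the unit ball of $C^1$ yields convergence of the associated linear functionals in the operator norm. The only nontrivial step in the whole argument is the Arzel\`a--Ascoli compactness in item (1); the rest is bookkeeping, and I do not anticipate substantial obstacles.
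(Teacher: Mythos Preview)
Your proposal is correct and matches the paper's approach: the paper simply cites \cite{Villani} for item (1) and remarks that item (2) ``follows from the definition,'' and your write-up supplies exactly the standard Arzel\`a--Ascoli/duality argument for (1) and the one-line Lipschitz bound for (2) that these references and remarks point to. Nothing needs to be changed.
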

See \cite[Sec. 6]{Villani} for the proof of the first statement. The second statement follows from the definition.

\section{(-1)-measures and smoothness of irrational Arnold tongues}
\label{sec-tongues}

Let $\mu_f$ be the (-1)-measure of a circle homeomorphism $f$; if $f$ is a $C^2$-smooth diffeomorphism or a $C^3$-smooth multicritical map with irrational rotation number, this measure exists and is unique due to the above. The following result is \cite[Theorem 2]{DY}. 
\begin{theorem}
\label{th-tangent}
For any irrational $\alpha$, in the Banach manifold of $C^2$-smooth orientation-preserving circle homeomorphisms of the circle with the norm
$$\|f\|_{C^2}=\sum_{i=0,1,2}\|f^{(i)}\|_\infty,$$
the set $\mathcal M_\alpha =\{f\in \mathcal D \mid \rho (f) = \alpha\}$ at each its point $f_0$ has a tangent subspace in the sense of Gateaux, given by 
\begin{equation}
\label{eq-tangent}
v\in T_{f_0}\mathcal M_{\alpha}\Leftrightarrow \int_{\bbR/\bbZ} v|_{f_0^{-1}(x)} d\mu_{f_0}=0,
\end{equation} 
and this tangent space varies continuously with $f_0$ in the weak topology.
\end{theorem}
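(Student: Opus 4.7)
The theorem reduces to identifying the Gateaux derivative of $\rho$ at $f_0$: if one shows
\begin{equation*}
d\rho|_{f_0}(v) \;=\; L_{f_0}(v) \;:=\; \int_{\bbR/\bbZ} v(f_0^{-1}(x))\, d\mu_{f_0}(x),
\end{equation*}
then by the very definition of "tangent subspace in the sense of Gateaux," $T_{f_0}\mathcal M_\alpha = \ker L_{f_0}$, which is exactly \eqref{eq-tangent}. The weak continuity assertion then reduces to weak continuity of the assignment $f_0 \mapsto \mu_{f_0}$.

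To derive $d\rho|_{f_0} = L_{f_0}$ I would follow the approach of Douady-Yoccoz. Lift $f_t = f_0 + tv$ to $F_t = F_0 + tV$ on $\bbR$; the chain rule for iterates gives
\begin{equation*}
\partial_t F_t^n(x)\big|_{t=0} = \sum_{k=0}^{n-1} V(F_0^k(x))\,(F_0^{n-1-k})'(F_0^{k+1}(x)),
\end{equation*}
and the target formula is the limit of $n^{-1}$ times this, once the interchange of $\partial_t$ and $\lim_n$ inside $\rho(f_t) = \lim_n F_t^n(x)/n$ is justified. A direct pointwise analysis fails here because the Cesaro-weighted sum oscillates in $x$ (the cocycle $(f_0^n)'$ does not stabilize along orbits), so instead one characterizes both $d\rho|_{f_0}$ and $L_{f_0}$ as continuous linear functionals on $C^2(\bbR/\bbZ)$ and matches them by structural properties. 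Both vanish on coboundaries $v = w\circ f_0 - f_0'\cdot w$: for $d\rho|_{f_0}$ because conjugation preserves $\rho$, and for $L_{f_0}$ directly from \eqref{eq-def} applied to $\phi = w$:
\begin{equation*}
L_{f_0}(v) = \int w\, d\mu_{f_0} - \int f_0'(f_0^{-1}(x))\, w(f_0^{-1}(x))\, d\mu_{f_0}(x) = 0.
\end{equation*}
Matching on one additional transverse direction (e.g., $f_0'$, using \eqref{eq-def} with $\phi = 1$ to evaluate $L_{f_0}(f_0') = 1$), together with a density argument for coboundaries plus the rotation direction, upgrades this coincidence to equality $d\rho|_{f_0} = L_{f_0}$ on the full $C^2$-tangent space.

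For weak continuity, suppose $f_n \to f_0$ in $C^2$. By weak compactness of probability measures on $\bbR/\bbZ$ and uniqueness of the $(-1)$-measure, $\mu_{f_n} \to \mu_{f_0}$ weakly — any subsequential weak limit satisfies \eqref{eq-def} for $f_0$ and therefore must coincide with $\mu_{f_0}$. Combined with $C^2$-convergence $f_n^{-1} \to f_0^{-1}$, this yields $L_{f_n}(v) \to L_{f_0}(v)$ for every $v$, which is precisely the claimed weak continuity of the tangent hyperplane.

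The main obstacle is the identification $d\rho|_{f_0} = L_{f_0}$. The technical heart is the density argument needed to pass from the two "canonical" checks above (vanishing on coboundaries, matching on the rotation direction) to equality of the two functionals on all of $C^2$; this ties the local geometry of the $C^2$-tangent space at $f_0$ to solvability properties of the cohomological equation $v = w\circ f_0 - f_0'\cdot w$, and is the real technical substance of Douady-Yoccoz's original proof.
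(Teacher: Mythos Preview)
The paper does not prove Theorem~\ref{th-tangent}: it is quoted verbatim as \cite[Theorem 2]{DY} and used as a black box in the proof of Theorem~\ref{th-tangent-cr}. There is therefore no ``paper's own proof'' to compare your proposal against.

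Since you nonetheless sketched an argument, a brief comment: your outline captures the right ingredients (coboundaries lie in the kernel of both functionals, the constant direction is transverse, weak continuity follows from compactness plus uniqueness of $\mu_{f_0}$), but as written it has a circularity. You treat $d\rho|_{f_0}$ as an already-existing continuous linear functional and then identify it with $L_{f_0}$ by matching on a spanning set; however, the existence of the Gateaux derivative $\lim_{t\to 0}t^{-1}(\rho(f_0+tv)-\alpha)$ is exactly the nontrivial assertion, and it cannot be assumed in advance. In Douady--Yoccoz the logic runs the other way: one first uses the $(-1)$-measure to build, for each $v$ in the putative tangent hyperplane $\ker L_{f_0}$, an explicit one-parameter family of conjugacies realizing $v$ as a tangent direction to $\mathcal M_\alpha$, and separately checks the transverse direction; differentiability of $\rho$ is then a consequence rather than a hypothesis. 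Your concluding paragraph correctly flags that the density/solvability step for the cohomological equation is where the real work lies, but the overall logical flow of your sketch would need to be reorganized to avoid presupposing what is to be shown.
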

We will prove the analogue of this result for multicritical circle maps, for Frechet derivatives instead of Gateaux derivatives.

Let $\mathcal D$ be a Banach manifold of $\bbR/\bbZ$-preserving orientation-preserving $C^3$-smooth maps with the norm
$$\|f\|_{C^3}=\sum_{i=0,1,2,3}\|f^{(i)}\|_\infty.$$ Circle diffeomorphisms form an open subset $\mathcal B$ in $\mathcal D$; multicritical circle maps form a dense subset in the boundary of this open subset. Rotation number is defined on $\overline {\mathcal B}$, and any diffeomorphism or a  multicritical circle map from $ \overline {\mathcal B}$ has a unique (-1)-measure.
\begin{theorem}
\label{th-tangent-cr}
For any irrational $\alpha$, the set $\mathcal M_\alpha =  \{f\in \mathcal D \mid \rho (f) = \alpha\}$ at each its point $f_0\in \overline {\mathcal B}$ which is either a diffeomorphism or a multicritical circle map has a tangent subspace in $\overline{\mathcal B}$ in the sense of Frechet, given by \eqref{eq-tangent}.

Moreover, this tangent space varies continuously with $f_0$ in the weak topology. \end{theorem}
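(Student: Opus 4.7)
The plan is to apply Theorem \ref{th-banach} with $V = C^3(\bbR/\bbZ)$, $\tau(v) = \rho(f_0 + v) - \alpha$, and $\overline S = \{v \in V \mid f_0 + v \in \overline{\cB},\ \|v\|_{C^3} \leq r\}$ for a small $r>0$. Since $\cB$ is convex---any convex combination of circle diffeomorphisms is again a $C^3$ orientation-preserving diffeomorphism---so is $\overline{\cB}$, and $\overline S$ is therefore a star domain around $0$ whose interior $S$ consists of those $v$ for which $f_0 + v$ is a genuine diffeomorphism.

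For the approximating functions, I fix a $C^3$ function $w$ with $w'(x) > 0$ everywhere and set $f_n = f_0 + w/n$, so that $f_n \in \cB$ and $f_n \to f_0$ in $C^3$. For every $v \in \overline S$ one has $(f_n+v)'(x) \geq w'(x)/n > 0$, so $f_n+v \in \cB$, and this remains true on a small $C^3$-neighborhood of $\overline S$. Setting $\tau_n(v) = \rho(f_n + v) - \alpha$, Theorem \ref{th-tangent} of Douady--Yoccoz gives Gateaux differentiability of $\tau_n$ on that neighborhood, with differential at $v$ equal to the bounded linear functional $L_{f_n+v}$, where $L_g(u) := \int u|_{g^{-1}(x)}\, d\mu_g(x) = \int u\, d\nu_g$ and $\nu_g := (g^{-1})_* \mu_g$. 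Pointwise convergence $\tau_n(v) \to \tau(v)$ on $\overline S$ is immediate from $C^0$-continuity of $\rho$.

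The main step is to verify, for any sequence $v_n \to 0$ in $\overline S$, the operator-norm convergence $L_{g_n} \to L_{f_0}$ in $(C^3)^*$, where $g_n := f_n + v_n \to f_0$ in $C^3$. I reduce this to weak convergence $\nu_{g_n} \to \nu_{f_0}$ of the pushforward measures. Two ingredients are needed: (i) weak convergence $\mu_{g_n} \to \mu_{f_0}$, which follows from compactness of probability measures combined with uniqueness of the $(-1)$-measure of $f_0$ from Theorem \ref{th-main} (any weak subsequential limit of $\mu_{g_n}$ is a $(-1)$-measure for $f_0$ by the passage-to-the-limit argument of the proof of Theorem \ref{th-exist}, hence equals $\mu_{f_0}$); and (ii) uniform convergence $g_n^{-1} \to f_0^{-1}$, which follows from $g_n \to f_0$ uniformly and $f_0$ being a homeomorphism. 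Splitting $\int u\, d\nu_{g_n} - \int u\, d\nu_{f_0}$ into the usual two terms and using uniform continuity of $u$ then yields weak convergence $\nu_{g_n} \to \nu_{f_0}$. Theorem \ref{th-KR} upgrades this to $\|L_{g_n} - L_{f_0}\|_{(C^1)^*} \to 0$, hence also in $(C^3)^*$ via the continuous embedding $C^3 \hookrightarrow C^1$. Theorem \ref{th-banach} then delivers Frechet differentiability of $\tau$ at $0$ with derivative $L_{f_0}$, which is equivalent to the claimed tangent space property.

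The continuous weak variation of the tangent space follows from the same compactness-and-uniqueness argument applied to a varying basepoint: if $f_0^{(k)} \to f_0$ in $\overline{\cB}$, then $\mu_{f_0^{(k)}} \to \mu_{f_0}$ weakly by uniqueness, hence $\nu_{f_0^{(k)}} \to \nu_{f_0}$ weakly, which is exactly the statement that the defining functional of the tangent hyperplane varies continuously in weak-$\ast$. The main obstacle is securing step (i); this is where the uniqueness of $(-1)$-measures from Theorem \ref{th-main} is indispensable, and is precisely the input the present paper has just established.
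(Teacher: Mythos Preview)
Your argument has a genuine gap in the choice of $\tau_n$. You set $\tau_n(v)=\rho(f_n+v)-\alpha$ and then invoke Theorem~\ref{th-tangent} to claim that $\tau_n$ is Gateaux differentiable on a neighborhood of $\overline S$ with differential $L_{f_n+v}$. But Theorem~\ref{th-tangent}, as stated, only describes the Gateaux tangent to the \emph{level set} $\mathcal M_\alpha$ at each of its points; it says nothing about differentiability of $\rho$ itself away from $\mathcal M_\alpha$. Even granting the stronger Douady--Yoccoz statement that $\rho$ is Gateaux differentiable at every diffeomorphism with \emph{irrational} rotation number, the points $v$ with $\rho(f_n+v)\in\bbQ$ fill an open dense set of any neighborhood of $\overline S$: on the interior of each rational tongue $\tau_n$ is locally constant (derivative $0$, not $L_{f_n+v}$), and on the tongue boundaries $\tau_n$ is not Gateaux differentiable at all. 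Hence the hypothesis of Theorem~\ref{th-banach} fails, and the integral identity $\tau_n(v)=\tau_n(0)+\int_0^1 d|_{tv}\tau_n(v)\,dt$ used in its proof is unavailable along segments $[0,v]$, which necessarily cross infinitely many tongues. In fact your target conclusion, Fr\'echet differentiability of $\rho$ at $f_0$, is strictly stronger than what the theorem asserts.

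The paper avoids this obstacle by replacing $\rho$ with the \emph{graph function}: it splits $T\mathcal D=V_0\oplus\bbR$ with $V_0=\{v(0)=0\}$, and defines $\tau(v)$ (resp.\ $\tau_n(v)$) as the unique constant $t$ with $\rho(f_0+v+t)=\alpha$ (resp.\ $\rho(f_n+v+t)=\alpha$). The key point is that $f_n+v+\tau_n(v)$ lies in $\mathcal M_\alpha$ for \emph{every} $v$, so Theorem~\ref{th-tangent} applies at every $v$ with $f_n+v$ a diffeomorphism and yields Gateaux differentiability of $\tau_n$ on a full neighborhood of $\overline S$, exactly as Theorem~\ref{th-banach} requires. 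Your treatment of the convergence step (weak convergence of $\mu_{g_n}$ via uniqueness from Theorem~\ref{th-main}, pushforward to $\nu_{g_n}$, and upgrade via Theorem~\ref{th-KR}) is essentially the same as the paper's and is fine; only the framing of $\tau$ and $\tau_n$ needs to be changed.
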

\begin{proof}

Let $f_0\in \overline {\mathcal B}$, and assume that $f_0$ is a multicritical circle map with $\rho(f)=\alpha\in \bbR\setminus \bbQ$. 
Fix small $c>0$ so that we can find circle diffeomorphisms $f_n$ such that $f_n\to f_0$ in $\mathcal D$ and 
 \begin{equation}
\label{eq-c}
f_n'>f_0'\text{ whenever }f_0'<c. 
\end{equation}
Let $V_0 = \{v(0)=0\}$ be the subspace of $T\mathcal D$. Represent $\mathcal M_\alpha$ as a graph of a continuous function $\tau(v)\colon V_0\to \bbR$: let $\mathcal M_\alpha$ be given by $\{f_0+ v+\tau(v), v\in V_0\}$. Then $\tau(v)$ is well-defined and continuous whenever $f_0+v$ is a circle homeomorphism, since the rotation number in the family $t\to f_0+v+t$ is monotonic and strictly monotonic at the points with irrational rotation numbers. Also, we have  $\tau(0)=0$. Let $$S = \{v \in V_0   \mid |v'|<c/2 \text{ and } f_0+v \text{ is a circle diffeomorphism}\},$$  $$\overline S =\{v \in V_0  \mid |v'|\le c/2 \text{ and } f_0+v \text{ is a circle homeomorphism}\}.$$ Due to the above, $\tau$ is continuous on $\overline S$; it is easy to see that $\overline S$ is a  star-shaped region. 

Let $f_n$ be as above. Let 
\begin{equation}
\label{eq-tau}\tau_n(v) =  f_0(0)-f_n(0) + \tau(v +f_n-f_0 - f_n(0)+f_0(0)), 
\end{equation}
so that $\mathcal M_\alpha$ is given by $\{f_n+ v+\tau_n(v), v\in V_0\}$. 

Due to Theorem \ref{th-tangent}, $\tau_n$ is Gateaux differentiable at any point $v$ such that $f_n + v$ is a circle diffeomorphism. 
This open set  contains $\overline S$ for large $n$: indeed,  if $f_0+v$ is a circle homeomorphism, then  $f_0'+v'\ge 0$. Thus at the points with $f_0'<c$, we have $f_n'+v' > f_0'+v'\ge 0 $ due to \eqref{eq-c}. At the points with  $f_0'\ge c$, we have $f_n'+v'>0 $ for large $n$ due to the inequality $|v'|\le c/2$. We conclude that $\tau_n$ is Gateaux differentiable on a neighborhood of $\overline S$. Also, $\tau_n\to \tau$ on $\overline S$ due to \eqref{eq-tau}.

The derivative of $\tau_n$ at $v$ is given by \eqref{eq-tangent} for the circle map $f_n+v+\tau_n(v)$. We will be able to use Theorem \ref{th-banach} as soon as we prove that for any $v_n\to 0$, linear functionals \eqref{eq-tangent} for $g_n = f_n+v_n+\tau_n(v_n)$ converge  to the linear functional \eqref{eq-tangent} for $f_0$ in the operator norm.  Note that $g_n\to f_0$, since $f_n\to f_0, v_n\to 0$, and the rotation number is continuous. 

 Let $\tilde \mu_f$ be given by $\tilde \mu_f (I ) = \mu_f(f(I))$. Linear functionals in \eqref{eq-tangent} are given by $v\mapsto \int_{\bbR/\bbZ} v d\tilde \mu_f$. For any sequence $g_n\to f_0$ in $\mathcal M_\alpha$, any weakly convergent subsequence of a sequence of measures $\mu_{g_n}$ converges to a (-1)-measure for $f_0$; since this measure is unique, we have $\mu_{g_n}\to \mu_{f_0}$ weakly. Thus $\tilde \mu_{g_n}\to \tilde \mu_{f_0}$ weakly. This  implies convergence with respect to the Kantorovich-Rubinstein metric, thus (due to Theorem \ref{th-KR})  convergence of the corresponding linear functionals in the operator norm.
 
Theorem \ref{th-banach} now implies the first statement of the theorem for multicritical $f\in \mathcal {\overline B}$.

In the case $f\in \mathcal B$ (that is, when $f$ is a circle diffeomorphism), the proof is analogous but simpler: we can take $S$ to be a neighborhood of zero and $\tau_n=\tau$.  

Due to the same argument as above, for any sequence $g_n\to g$ where $g_n,g \in \mathcal M_\alpha$ are either diffeomorphisms or multicritical circle maps, we have $\tilde \mu_{g_n} \to \tilde \mu_{g}$ weakly. This  implies the second statement of the theorem.
\end{proof}

Let us note that we can consider the same construction in the Banach manifold ${\mathcal W}_U$ of bounded real-symmetric analytic maps in an 
annulus $U\subset \bbC/\bbZ$ around the circle $\bbR/\bbZ$. Denoting by $\mathcal A\subset {\mathcal W}_U$ the open set of diffeomorphisms of the circle, we see that every map in the boundary of $\mathcal A$ is multicritical. The same statement as above holds in this case for $$\mathcal M^A_\alpha=\{f\in \overline{\mathcal A},\;\rho(f)=\alpha\notin\bbQ\}.$$ We note that in the analytic context it can be shown that $\mathcal M^A_\alpha$ has a smooth Banach submanifold structure (compare with \cite{EsYam}).

 Finally,  we formulate a corollary of Theorem \ref{th-tangent-cr} for finite-parameter families of circle maps. 
Let $\lambda = (\lambda_1,\dots, \lambda_r)$ be a multidimensional parameter. 
For a family $f_\lambda$ of circle homeomorphisms, \emph{ Arnold $\alpha$-tongue} is the set 
$$
\{\lambda \mid \rho(f_{\lambda})=\alpha\}
$$
For rational $\alpha$, Arnold $\alpha$-tongues may have non-empty interiors. Assume that the family $f_\lambda$ is smooth and monotonic (i.e. $\frac {df}{d\lambda_1}>0$). Since in this case, the rotation number is strictly monotonic at irrational values, irrational tongues in such a family are graphs of continuous functions on $(\lambda_2,\dots, \lambda_n)$.

Consider a $C^1$-smooth family of $C^3$-smooth  circle maps $\lambda\to f_\lambda$. Assume that the family is monotonic with respect to $\lambda_1$:  $$\frac {df}{d\lambda_1}>0,$$ and assume that $f_\lambda$ is a  multicritical circle map for $\lambda_2=0$ and a circle diffeomorphism for $\lambda_2<0$. An example is a standard Arnold's family with parametrization $f_{\lambda}(x) = x+\lambda_1 + \left(\frac 1{2\pi}+\lambda_2\right)\sin 2\pi x$. 
\begin{corollary}
For a family $f_\lambda$ as above, for any irrational $\alpha$, the Arnold $\alpha$-tongue is given by a  function $\lambda_1 = a(\lambda_2,\dots, \lambda_n)$ that is continuously differentiable at $0$. 
Its derivative in the direction of $w\in \bbR^{n-1}$ satisfies
\begin{equation}
\label{eq-deriv}
L_w a \cdot \int_{\bbR/\bbZ} \frac{df_\lambda}{d\lambda_1} (f^{-1}(x)) d\mu_{f} = -\int_{\bbR/\bbZ} L_w f_{\lambda}|_{(f^{-1}(x))} d\mu_f.
\end{equation}

\end{corollary}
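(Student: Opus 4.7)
The strategy is to view the tongue as a curve in $\mathcal M_\alpha$ and apply Theorem \ref{th-tangent-cr}. Strict monotonicity of $\lambda_1 \mapsto \rho(f_{(\lambda_1,w)})$ at irrational values, together with continuity of $\rho$, already realizes the tongue near the origin as the graph of a continuous function $\lambda_1 = a(w)$; the content of the corollary is Frechet differentiability at $w=0$ with the explicit formula \eqref{eq-deriv}.

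To access Theorem \ref{th-tangent-cr}, I would use the same graph parameterization of $\mathcal M_\alpha$ that appears in its proof. Let $V_0 = \{v \in T\mathcal D : v(0) = 0\}$, and decompose $f_\lambda - f_0 = v_\lambda + c_\lambda$ with $c_\lambda := (f_\lambda - f_0)(0) \in \bbR$ and $v_\lambda \in V_0$. Locally, $\mathcal M_\alpha$ is the graph $\{f_0 + v + \tau(v) : v \in \overline S\}$ of a continuous function $\tau$, Frechet differentiable at $0$ with derivative determined by \eqref{eq-tangent} (and $\int 1\, d\mu_{f_0} = 1$):
$$d\tau|_0(v) \;=\; -\int_{\bbR/\bbZ} v|_{f_0^{-1}(x)}\, d\mu_{f_0}.$$
The tongue condition $\rho(f_{(a(w),w)}) = \alpha$ then reduces to the scalar equation $c_{(a(w),w)} = \tau\bigl(v_{(a(w),w)}\bigr)$.

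The remaining step is to linearize. The $C^1$-smoothness of $\lambda \mapsto f_\lambda$ into $\mathcal D$ yields Frechet expansions $c_\lambda = \sum_i \lambda_i C_i + o(|\lambda|)$ and $v_\lambda = \sum_i \lambda_i V_i + o(|\lambda|)$ in $V_0$, with $C_i := \partial_{\lambda_i} f_\lambda(0)|_0$ and $V_i := \partial_{\lambda_i} f_\lambda|_0 - C_i$. Applying $d\tau|_0$ componentwise, the constants $C_i$ outside the integral cancel against the $C_i$ subtracted inside $V_i$, leaving
$$\sum_{i=1}^n \lambda_i \int_{\bbR/\bbZ} \partial_{\lambda_i} f_\lambda|_{f_0^{-1}(x)}\, d\mu_{f_0} \;=\; o\bigl(|a(w)|+|w|\bigr).$$
Setting $\lambda_1 = a(w)$ and $\lambda_i = w_i$ for $i \ge 2$ produces exactly \eqref{eq-deriv}. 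Monotonicity $\partial_{\lambda_1} f_\lambda > 0$ makes the coefficient of $a(w)$ strictly positive, so the resulting ratio is well-defined. Continuous differentiability at $0$ is then immediate from the second part of Theorem \ref{th-tangent-cr}: $\mu_f$ depends weakly continuously on $f$, hence both integrals in \eqref{eq-deriv} vary continuously with $\lambda$.

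The only genuine obstacle is the self-referential form of $c_{(a(w),w)} = \tau\bigl(v_{(a(w),w)}\bigr)$: both sides contain $a(w)$, so the error term in the linearization is a priori $o(|a(w)|+|w|)$ rather than $o(|w|)$. One closes this by a short bootstrap using the positivity of $\int \partial_{\lambda_1} f_\lambda|_{f_0^{-1}(x)}\, d\mu_{f_0}$: continuity of $a$ gives $a(w) = o(1)$, the equation then forces $a(w) = O(|w|)$, and a final application upgrades this to the asserted Frechet derivative.
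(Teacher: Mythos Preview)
Your proof is correct and follows the same essential strategy as the paper: view the tongue as the intersection of $\mathcal M_\alpha$ with the finite-dimensional family $\lambda\mapsto f_\lambda$, and use Theorem~\ref{th-tangent-cr} together with the transversality furnished by $\int \partial_{\lambda_1} f_\lambda|_{f_0^{-1}}\, d\mu_{f_0}>0$. The paper's own proof is a three-sentence sketch that simply asserts that the tangent space of the intersection equals the intersection of tangent spaces; you have unpacked this implicit-function-type step explicitly via the graph parameterization $\tau$ from the proof of Theorem~\ref{th-tangent-cr}, including the bootstrap $a(w)=O(|w|)$ needed to convert the $o(|a(w)|+|w|)$ error into $o(|w|)$. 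That bootstrap is a genuine detail the paper leaves to the reader, so your version is more complete rather than different in substance.
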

\begin{proof}
The Arnold $\alpha$-tongue is the intersection of the set $\{f \in \mathcal D \mid \rho( f) =\alpha\}$ with the smooth family $f_\lambda$. The family is transversal to this tangent space due to monotonicity: $$\int_{\bbR/\bbZ} \frac{df}{d\lambda_1}  (f^{-1}(x)) d\mu_f >0.$$ Thus the tangent space to the $\alpha$-tongue is the intersection of the tangent space \eqref{eq-tangent} with the tangent space to our family. This gives \eqref{eq-deriv}. 
\end{proof}

\bibliographystyle{amsalpha}
\bibliography{biblio}
\end{document}